\DeclareMathOperator{\Imm}{Im}
\DeclareMathOperator{\lcm}{lcm}
\renewcommand{\phi}[0]{\varphi}
\renewcommand{\theta}[0]{\vartheta}
\renewcommand{\epsilon}[0]{\varepsilon}
\newcommand{\Z}{\text{$\mathbf{Z}$}}
\newcommand{\Q}{\text{$\mathbf{Q}$}}
\newcommand{\Pro}{\text{$\mathbf{P}^1$}}
\newcommand{\F}{\text{$\mathbf{F}$}}
\newcommand{\K}{\text{$\mathbf{K}$}}
\newcommand{\Fq}{\text{$\mathbf{F}_q$}}
\newcommand{\Kp}{\text{$\mathit{{Kob}}_0$}}
\newcommand{\Kob}{\text{$\mathit{Kob}_0$}}
\newtheorem{theorem}{Theorem}[section]
\newtheorem{lemma}[theorem]{Lemma}
\theoremstyle{definition}
\newtheorem{example}[theorem]{Example}
\theoremstyle{remark}
\newtheorem{remark}[theorem]{Remark}
\numberwithin{equation}{section}
\DeclareMathOperator{\Tr}{Tr}
\DeclareMathOperator{\End}{End}
\begin{document}

\bibliographystyle{amsalpha}

\title[Graphs associated with the map $X \mapsto X + X^{-1}$]{Graphs associated with the map $X \mapsto X + X^{-1}$\\
in finite fields of characteristic two\\}

\author{S.~Ugolini}
\email{sugolini@gmail.com}

\date{August 30, 2011}

\begin{abstract}
In this paper we study the structure of the graphs associated with the iterations of the map $x \mapsto x+x^{-1}$ over finite fields of characteristic two. Formulas are given for the length of the cycles and the depth of the trees relying upon the structure of the group of the rational points of Koblitz curves and the congruences of Kloosterman sums modulo powers of 2.
\end{abstract}

\maketitle

\section{Introduction}
The map which sends $x$ to $x + x^{-1}$ in a finite field (with a point $\infty$ added to it) plays a role in various investigations. The so-called $Q$-transform depends on it, as it takes a polynomial $f$ of degree $n$ to the self-reciprocal polynomial $f^Q(x) = x^n f(x + x^{-1})$ of degree $2n$ (see \cite{jung}). Also, the possible correlation between the multiplicative orders of $x$ and $x +x^{-1}$ was studied in \cite{shpa}.

Iteration of maps on finite fields are also important. For example, Pollard's integer factoring algorithm is based on the iteration of a quadratic map $x \mapsto x^2 + c \pmod{N}$, where $c \not = 0,-2$ is a randomly-chosen constant and $N$ is the integer to be factored. See \cite{vasiga} for one of several studies on iterations of maps of this form in a finite field.

Our work focuses on iterations of the map $x \mapsto x + x^{-1}$ on the projective line $\Pro (\F_{2^n})= \F_{2^n} \cup \{\infty\}$, where $\F_{2^n}$ is a finite field of characteristic 2. A directed graph on $\Pro (\F_{2^n})$ is associated with the map in an obvious way. Each connected component consists of a cycle and directed binary trees entering the cycle at various points.

Experimental evidence has shown that such graphs present remarkable symmetries. In fact, it turns out that the map is closely related to the duplication map on a certain elliptic curve on $E$, the Koblitz curve $y^2 + xy = x^3 + 1$ over $\F_2$. Using this fact we give a precise description of the structure of such graphs, including the length of the cycles and the depth of the trees.

\section{Preliminaries}
\label{preliminaries}
For a fixed positive integer $n$ let $\F_{2^n}$ be the field with $2^n$ elements and $\Pro ( \F_{2^n}) = \F_{2^n} \cup \left\{ \infty \right\}$ the projective line over $\F_{2^n}$. We define a map $\theta$ over $\Pro ( \F_{2^n})$ in such a way:
\begin{displaymath}
\theta (\alpha) = \left\{
\begin{array}{lll}
\infty & \text{if $\alpha = 0$ or $\infty$}\\
\alpha + \alpha^{-1} & \textrm{otherwise}.
\end{array}
\right.
\end{displaymath}
We can associate a graph with the map $\theta$ over the field $\F_{2^n}$ in a natural way. The vertices of the graph are labelled by the elements of $\Pro (\F_{2^n})$. If $\alpha \in \Pro (\F_{2^n})$ and $\beta = \theta(\alpha)$, then we connect with a directed edge the vertex $\alpha$ with the vertex $\beta$.
If $\gamma \in \Pro (\F_{2^n})$ and $\theta^k (\gamma) = \gamma$, for some positive integer $k$, then $\gamma$ belongs to a cycle of length $k$ or a divisor of $k$.
An element $\gamma$ belonging to a cycle can be the root of a reverse-directed tree, provided that $\gamma = \theta (\alpha)$, for some $\alpha$ which is not contained in any cycle. 

\begin{example}
Consider the graph associated with the map $\theta$ in the field $\F_{2^5}$, constructed as the splitting field  over $\F_2$ of the polynomial $x^5+x^2+1 \in \F_2[x]$. If $\alpha$ is a root of such a polynomial in $\F_{2^5}$, then $\Pro (\F_{2^5}) = \{ 0 \} \cup \{ \alpha^i : 1 \leq i \leq 31 \} \cup \{ \infty \}$. We will label the nodes denoting the elements $\alpha^i$ by the exponent $i$ and the zero element by `0'.

\begin{center}
    \unitlength=2.5pt
    \begin{picture}(70, 80)(-35,-40)
    \gasset{Nw=5,Nh=5,Nmr=2.5,curvedepth=0}
    \thinlines
    \footnotesize
    \node(N1)(0,10){$1$}
    \node(N2)(9.5,3.1){$8$}
    \node(N3)(5.9,-8.1){$2$}
    \node(N4)(-5.9,-8.1){$16$}
    \node(N5)(-9.5,3.1){$4$}
    \node(N6)(0,20){$23$}
    \node(N7)(19,6.2){$29$}
    \node(N8)(11.8,-16.2){$15$}
    \node(N9)(-11.8,-16.2){$27$}
    \node(N10)(-19,6.2){$30$}
    \node(N11)(9.3,28.5){$9$}
    \node(N12)(24.3,17.4){$10$}
    \node(N13)(30,0){$21$}
    \node(N14)(24.3,-17.4){$13$}
    \node(N15)(9.3,-28.5){$18$}
    \node(N16)(-9.3,-28.5){$11$}
    \node(N17)(-24.3,-17.4){$20$}
    \node(N18)(-30,0){$26$}
    \node(N19)(-24.3,17.4){$5$}
    \node(N20)(-9.3,28.5){$22$}

    \drawedge(N2,N1){}
    \drawedge(N3,N2){}
    \drawedge(N4,N3){}
    \drawedge(N5,N4){}
    \drawedge(N1,N5){}
    \drawedge(N6,N1){}
    \drawedge(N7,N2){}
    \drawedge(N8,N3){}
    \drawedge(N9,N4){}
    \drawedge(N10,N5){}
    \drawedge(N11,N6){}
    \drawedge(N12,N7){}
    \drawedge(N13,N7){}
    \drawedge(N14,N8){}
    \drawedge(N15,N8){}
    \drawedge(N16,N9){}
    \drawedge(N17,N9){}
    \drawedge(N18,N10){}
    \drawedge(N19,N10){}
    \drawedge(N20,N6){}
\end{picture}
\begin{picture}(60, 65)(-25,-40)
    \gasset{Nw=5,Nh=5,Nmr=2.5,curvedepth=0}
    \unitlength=2.5pt
    \thinlines
      \footnotesize
    \node(N1)(0,10){$3$}
    \node(N2)(9.5,3.1){$12$}
    \node(N3)(5.9,-8.1){$17$}
    \node(N4)(-5.9,-8.1){$6$}
    \node(N5)(-9.5,3.1){$24$}
    \node(N6)(0,20){$19$}
    \node(N7)(19,6.2){$14$}
    \node(N8)(11.8,-16.2){$25$}
    \node(N9)(-11.8,-16.2){$7$}
    \node(N10)(-19,6.2){$28$}
    \drawedge(N2,N1){}
    \drawedge(N3,N2){}
    \drawedge(N4,N3){}
    \drawedge(N5,N4){}
    \drawedge(N1,N5){}
    \drawedge(N6,N1){}
    \drawedge(N7,N2){}
    \drawedge(N8,N3){}
    \drawedge(N9,N4){}
    \drawedge(N10,N5){}
    \node(N11)(30,10){$31$}
    \node(N12)(30,0.95){`0'}
    \node(N13)(30,-8.1){$\infty$}
    \drawloop[loopangle=-90](N13){}
    \drawedge(N11,N12){}
    \drawedge(N12,N13){}
    \end{picture}
\end{center}
\end{example}
In the following we will denote the absolute trace of an element $\alpha \in \F_{2^n}$ by $\Tr_n (\alpha)$, namely
\begin{displaymath}
\Tr_{n} (\alpha) = \sum_{i=0}^{n-1} {\alpha}^{2^i}.
\end{displaymath}
Since $\Tr_{n} (\alpha) \in \F_2$, the set of points belonging to the projective line $\Pro (\F_{2^n})$ can be partitioned in the subsets
\begin{eqnarray*}
A_n &  = & \{ \alpha \in \F_{2^n}^* : \Tr_{n} (\alpha) = \Tr_{n} (\alpha^{-1}) \} \cup \{ 0, \infty \} \\
B_n & = & \{ \alpha \in \F_{2^n}^* : \Tr_{n} ( \alpha ) \not = \Tr_{n} (\alpha^{-1})  \}.
\end{eqnarray*}

The following holds.
\begin{lemma}\label{tr_inverse}
If $\alpha$ is an element of $\F_{2^n}^*$, then
\begin{equation*}
\Tr_{n} \left((\alpha + \alpha^{-1})^{-1} \right) = 0.
\end{equation*}
\end{lemma}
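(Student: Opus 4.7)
The plan is to write $\beta=\alpha+\alpha^{-1}$ in a simple closed form, invert it, and then recognize the result as being of the form $x+x^2$, for which the trace obviously vanishes.

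First I would simplify: working in characteristic $2$, one has
\[
\alpha + \alpha^{-1} = \frac{\alpha^2+1}{\alpha} = \frac{(\alpha+1)^2}{\alpha},
\]
so (assuming the right-hand side makes sense, i.e.\ $\alpha\neq 1$, which is forced anyway since otherwise $\alpha+\alpha^{-1}=0$ is not invertible)
\[
(\alpha+\alpha^{-1})^{-1} = \frac{\alpha}{(\alpha+1)^2}.
\]

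The key manipulation is a partial-fraction-style decomposition: writing $\alpha = (\alpha+1)+1$ in the numerator gives
\[
\frac{\alpha}{(\alpha+1)^2} = \frac{1}{\alpha+1} + \frac{1}{(\alpha+1)^2} = x + x^2,
\]
where I set $x = (\alpha+1)^{-1} \in \F_{2^n}$. Thus $(\alpha+\alpha^{-1})^{-1}$ is in the image of the additive polynomial $t\mapsto t+t^2$.

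Finally I would invoke Frobenius-invariance of the absolute trace: since $\Tr_n(y^2)=\Tr_n(y)$ for every $y\in\F_{2^n}$, I get
\[
\Tr_n\bigl((\alpha+\alpha^{-1})^{-1}\bigr) = \Tr_n(x) + \Tr_n(x^2) = 2\,\Tr_n(x) = 0.
\]
There is no real obstacle here; the only thing to notice is the partial-fraction identity $\alpha/(\alpha+1)^2 = 1/(\alpha+1)+1/(\alpha+1)^2$, after which the proof is forced by the standard fact that $\Tr_n$ annihilates the image of $t\mapsto t+t^2$ (additive Hilbert $90$ for $\F_{2^n}/\F_2$).
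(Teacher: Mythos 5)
Your proof is correct and rests on exactly the same identity as the paper's: the decomposition $\alpha/(\alpha+1)^2 = 1/(\alpha+1)+1/(\alpha+1)^2$, which the paper applies termwise inside the trace sum so that it telescopes, and which you phrase equivalently as $\Tr_n(x+x^2)=0$. The two arguments are the same computation presented in slightly different packaging.
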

\begin{proof}
We compute explicitly the trace of $\beta = (\alpha + \alpha^{-1})^{-1}$:
\begin{eqnarray*}
\Tr_{n} (\beta) & = & \sum_{i=0}^{n-1} \left(\frac{\alpha}{\alpha^2 + 1} \right)^{2^i} = \sum_{i=1}^n \left(\frac{1}{\alpha^{2^{i-1}}+1} + \frac{1}{\alpha^{2^i}+1} \right) = 0.
\end{eqnarray*}
\end{proof}
\begin{remark}
As a consequence of previous Lemma, if one considers the restrictions $\theta_{A_n}$ and $\theta_{B_n}$ of $\theta$ at $A_n$ (respectively $B_n$), then $\Imm(\theta_{A_n}) \subseteq A_n$ and $\Imm(\theta_{B_n}) \subseteq B_n$. This amounts to saying that the graph associated with the map $\theta$ in the field $\F_{2^n}$ is the union of the graphs associated with the maps $\theta_{A_n}$ over $A_n$ and $\theta_{B_n}$ on $B_n$.  
\end{remark}

The map $\theta$ is strictly related to the duplication map defined over Koblitz curves. We remind that a Koblitz curve is an elliptic curve defined over $\F_2$ by an equation of the form
\begin{equation*}
y^2+xy = x^3+a x^2 +1,
\end{equation*}
where  $a \in \F_2$. In particular, for $a=0$ we get the  curve $\Kob$ defined by 
\begin{equation*}
y^2+xy=x^3+1.
\end{equation*} 
If $P = (x_1, y_1) \in \Kob (\F_{2^n})$, namely $P$ is a rational point of $\Kob$ over the field $\F_{2^n}$, then $2 P = (x_2, y_2)$, where
\begin{eqnarray*}
x_2 & = & x_1^2 + \frac{1}{x_1^2} = \theta(x_1)^2
\end{eqnarray*}
Moreover, if $P=(x,y) \in \Kob (\F_{2^n})$, then $-P=(x,x+y)$.

The following result holds (see \cite{LW}).
\begin{lemma}
Let $\beta \in \F_{2^n}$. Then, $\Tr_{n} (\beta) = 0$ if and only if there exists $\alpha \in \F_{2^n}$ such that $\beta = \alpha + {\alpha}^{2}$.
\end{lemma}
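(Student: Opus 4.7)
The plan is to treat both the Artin–Schreier map $\wp\colon \alpha \mapsto \alpha + \alpha^2$ and the trace $\Tr_n$ as $\F_2$-linear endomorphisms of $\F_{2^n}$ (the trace being $\F_2$-valued) and then compare dimensions of image and kernel.

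For the ``if'' direction, I would compute directly: assuming $\beta = \alpha + \alpha^2$,
\begin{equation*}
\Tr_n(\beta) = \sum_{i=0}^{n-1}\bigl(\alpha^{2^i} + \alpha^{2^{i+1}}\bigr) = \alpha + \alpha^{2^n} = 0,
\end{equation*}
since $\alpha^{2^n} = \alpha$ in $\F_{2^n}$. This shows $\Imm(\wp) \subseteq \ker(\Tr_n)$.

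For the converse, I would use a counting argument rather than exhibiting $\alpha$ explicitly. The map $\wp$ is $\F_2$-linear, and its kernel consists of those $\alpha \in \F_{2^n}$ satisfying $\alpha^2 = \alpha$, namely $\alpha \in \F_2$; so $\Size{\ker \wp} = 2$ and therefore $\Size{\Imm \wp} = 2^{n-1}$. On the other hand, $\Tr_n\colon \F_{2^n} \to \F_2$ is a nonzero $\F_2$-linear map (nonzero because there exist elements of $\F_{2^n}$ with nonzero trace, as $\Tr_n$ is a polynomial of degree $2^{n-1}$ and thus cannot vanish on all $2^n$ field elements), so it is surjective and $\Size{\ker \Tr_n} = 2^{n-1}$.

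Combining the two facts, $\Imm(\wp)$ is a subset of $\ker(\Tr_n)$ with the same cardinality, hence the two sets coincide, which is exactly the statement of the lemma. The only mildly subtle point is justifying that $\Tr_n$ is not identically zero so that its kernel has the expected codimension $1$; I would invoke the fact that $\Tr_n$, being given by a polynomial of degree $2^{n-1} < 2^n$, cannot vanish on every element of $\F_{2^n}$.
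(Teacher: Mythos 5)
Your proof is correct and complete. Note that the paper does not prove this lemma at all: it is stated as a known result with a pointer to \cite{LW}, so there is no internal argument to compare yours against. What you give is the standard additive Hilbert 90 argument over $\F_2$: the telescoping computation shows that the image of $\alpha \mapsto \alpha + \alpha^2$ lies in $\ker(\Tr_n)$, and the counting step (kernel of the Artin--Schreier map is exactly $\F_2$, so its image has $2^{n-1}$ elements, matching $\lvert \ker \Tr_n \rvert = 2^{n-1}$ because the trace is a nonzero $\F_2$-linear form) upgrades the inclusion to an equality. Your justification that $\Tr_n$ is not identically zero --- it is given by a nonzero polynomial of degree $2^{n-1} < 2^n$ and so cannot vanish on all of $\F_{2^n}$ --- is exactly the right elementary substitute for invoking separability of the extension, and the edge case $n=1$ is handled correctly by the same count. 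Nothing is missing.
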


We make immediately use of the Lemma above, proving the following.

\begin{lemma}\label{pre_1}
Let $x \in \F_{2^n}$. Then, there exists $y \in \F_{2^n}$ such that $(x,y) \in \Kob (\F_{2^n})$ if and only if $x = 0$ or $\Tr_{n}(x) = \Tr_{n}(x^{-1})$.
\end{lemma}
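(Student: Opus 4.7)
The plan is to reduce the quadratic (in $y$) equation defining $\Kob$ to an Artin--Schreier equation of the form $z^2 + z = \gamma$, and then apply the immediately preceding lemma, which says that such an equation is solvable in $\F_{2^n}$ if and only if $\Tr_n(\gamma) = 0$.

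First, I would dispose of the trivial case $x = 0$: the defining equation becomes $y^2 = 1$, which has the solution $y = 1$, so a suitable $y$ always exists.

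For $x \in \F_{2^n}^*$, I would divide the equation $y^2 + xy = x^3 + 1$ through by $x^2$ and substitute $z = y/x$, obtaining
\begin{equation*}
z^2 + z = x + x^{-2}.
\end{equation*}
Since the substitution $y \leftrightarrow z = y/x$ is a bijection on $\F_{2^n}^*$, the existence of $y \in \F_{2^n}$ with $(x,y) \in \Kob(\F_{2^n})$ is equivalent to the existence of $z \in \F_{2^n}$ solving this Artin--Schreier equation. By the previous lemma, such a $z$ exists if and only if $\Tr_n(x + x^{-2}) = 0$.

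It then remains to rewrite this trace condition in the desired form. Using the Frobenius-invariance of the absolute trace, $\Tr_n(x^{-2}) = \Tr_n((x^{-1})^2) = \Tr_n(x^{-1})$, and additivity gives $\Tr_n(x + x^{-2}) = \Tr_n(x) + \Tr_n(x^{-1})$. Since we are in characteristic $2$, this sum vanishes precisely when $\Tr_n(x) = \Tr_n(x^{-1})$, which yields the claim. There is no real obstacle here: the entire argument is a routine manipulation combined with the previous lemma, and the only thing to watch out for is the slight asymmetry between $x^{-1}$ and $x^{-2}$, which is resolved by Frobenius-invariance.
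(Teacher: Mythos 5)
Your proposal is correct and follows essentially the same route as the paper: both reduce the curve equation via $z = y/x$ to the Artin--Schreier equation $z^2 + z = x + x^{-2}$ and invoke the trace criterion from the preceding lemma, together with $\Tr_n(x^{-2}) = \Tr_n(x^{-1})$. (The paper handles the forward direction by directly computing $\Tr_n(z^2+z)=0$ rather than citing the lemma as a biconditional, but this is an immaterial difference.)
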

\begin{proof}
Let $y \in \F_{2^n}$ such that $(x,y) \in \Kob (\F_{2^n})$. If $x \not = 0$, then
\begin{equation*}
\frac{y^2}{x^2}+\frac{y}{x} = x + x^{-2}.
\end{equation*}
Since $\Tr_{n} \left(\dfrac{y^2}{x^2} \right) = \Tr_{n} \left(\dfrac{y}{x} \right)$ and $\Tr_{n} (x^{-1}) = \Tr_{n} (x^{-2})$, then $\Tr_{n} (x+ x^{-1}) = 0$.

Conversely, if $x= 0$, then $(x,y)=(0,1) \in \Kob (\F_{2^n})$. If $x \not = 0$ and $\Tr_{n} (x) = \Tr_{n} (x^{-1})$, then the equation
\begin{equation*}
z^2 + z = x + x^{-2}
\end{equation*}
has two solutions $z_1, z_2$. Let $y_i = z_i \cdot x$, for $i = 1$ or $2$. Since
\begin{equation*}
\frac{{y_i}^2}{x^2}+\frac{y_i}{x} = x+x^{-2},
\end{equation*}  
we get that ${y_i}^2 + x y_i = x^3 +1$ and we are done.
\end{proof}

If $a$ is an element of $\F_{2^n}$, then we can define the {Kloosterman sum}
\begin{displaymath}
S^{(n)}(a) = \sum_{x \in \F_{2^n}^*} (-1)^{\Tr_{n} (x^{-1}+ax)}.
\end{displaymath}

The values of the Kloosterman sums for $a=1$ are strictly related to the number of rational points of $\Kob$ over $\F_{2^n}$ (see \cite{LW} for more details). We have that 
\begin{equation*}
\lvert \Kob (\F_{2^n}) \rvert = 2^n+1+S^{(n)}(1).
\end{equation*}
In  \cite{CZ} some relations for $S^{(n)}(1)$ are given. It is proved that

\begin{equation}\label{K(1)mod8}
S^{(n)}(1) \equiv \left\{
\begin{array}{lll}
-1  & \pmod{8} & \textrm{for $n$ even, $n \not =2$}\\
3 & \pmod{8} & \textrm{for $n$ odd}. 
\end{array}
\right.
\end{equation}

Moreover
\begin{equation}\label{sn1}
S^{(n)} (1) = - 2 \cdot 2^{n/2} \cos(n \phi),
\end{equation}
where 

\begin{displaymath}
cos(\phi) = \frac{1}{-2 \sqrt{2}}; \quad \sin(\phi) = \frac{\sqrt{7}}{2 \sqrt{2}}.
\end{displaymath}

As a consequence of (\ref{sn1}) the following relation between $S^{(2n)}(1)$ and $S^{(n)}(1)$ holds:
\begin{equation}\label{s2nsn}
S^{(2n)} (1) = -S^{(n)}(1)^2 + 2^{n+1}.
\end{equation} 
Using  (\ref{s2nsn}) it is possible to generalize (\ref{K(1)mod8}).

\begin{lemma}\label{Sncon}
Let $n = 2^l m$ be a positive integer greater than 1 such that $n \not \in \{ 2, 4\}$, for some non-negative integer $l$ and odd integer $m$. 
Then 
\begin{displaymath}
S^{(n)} (1) \equiv \left\{
\begin{array}{ll}
-1 & \pmod {2^{l+2}} \\
-1+2^{l+2} &  \pmod {2^{l+3}}.
\end{array}
\right.
\end{displaymath}
\end{lemma}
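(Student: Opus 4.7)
The plan is to prove the statement by induction on $l$, using the recurrence (\ref{s2nsn}) as the inductive engine. Note first that the congruence $S^{(n)}(1) \equiv -1 + 2^{l+2} \pmod{2^{l+3}}$ immediately implies the weaker $S^{(n)}(1) \equiv -1 \pmod{2^{l+2}}$, so only the stronger form needs to be established.

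For the base case $l=0$, the integer $n=m$ is odd with $n>1$, and the target $S^{(n)}(1)\equiv -1+4\equiv 3\pmod 8$ is precisely the second line of (\ref{K(1)mod8}).

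For the inductive step, fix $l\geq 0$, take $n=2^{l+1}m$ with $m$ odd and $n\notin\{2,4\}$, and set $n'=n/2=2^l m$. The recurrence (\ref{s2nsn}) yields $S^{(n)}(1)=-S^{(n')}(1)^2+2^{n'+1}$. Writing $S^{(n')}(1)=-1+2^{l+2}(1+2k)$ from the inductive hypothesis and squaring gives
\[
S^{(n')}(1)^2 \equiv 1-2^{l+3}\pmod{2^{l+4}},
\]
since the cross term reduces to $-2^{l+3}$ modulo $2^{l+4}$ and the square $2^{2l+4}(1+2k)^2$ already vanishes there. Provided $2^{n'+1}\equiv 0\pmod{2^{l+4}}$, i.e.\ $n'\geq l+3$, one then reads off $S^{(n)}(1)\equiv -1+2^{l+3}\pmod{2^{l+4}}$, which is the claim at level $l+1$. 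A direct case check confirms $n'=2^l m\geq l+3$ whenever the inductive hypothesis is in force: for $l\geq 3$ this follows from $2^l\geq l+3$, while for $l\in\{0,1,2\}$ the remaining freedom $m\geq 3$ (forced indirectly by $n\notin\{2,4\}$ together with $n'\notin\{2,4\}$) is enough.

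The main obstacle is the boundary case $(l,m)=(2,1)$, where $n'=4$ lies in the hypothesis's exclusion set yet $n=8$ is in scope for the conclusion. I would dispose of this by a short bootstrap: $S^{(1)}(1)=1$ by inspection, then (\ref{s2nsn}) gives $S^{(2)}(1)=3$, $S^{(4)}(1)=-1$, and finally $S^{(8)}(1)=-(-1)^2+2^5=31=-1+2^5$, matching the claim at $(l,m)=(3,1)$. From $l\geq 3$ onward the value $n'=2^l m$ never lies in $\{2,4\}$, so the general inductive step applies with no further exceptions and the induction closes.
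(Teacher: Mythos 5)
Your proposal is correct and follows essentially the same route as the paper: induction on $l$ driven by the recurrence (\ref{s2nsn}), with the odd case (\ref{K(1)mod8}) as base and a direct computation of $S^{(8)}(1)=31$ to get past the excluded values $n'\in\{2,4\}$. The only (harmless) cosmetic differences are that you run a single unified induction with an explicit check of $n'\ge l+3$ where the paper separates the cases $m\ge 3$ and $n=2^l$, and that you observe the second congruence subsumes the first.
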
 

\begin{proof}
Let $m \geq 3$. We prove the thesis by induction on $l \geq 0$.

Let $l =0$. In this case $n$ is odd and the thesis follows  from (\ref{K(1)mod8}).

Suppose that the thesis holds for some non-negative integer $l-1$. Let $n=2^{l} m = 2(2^{l-1} m)$. For the sake of clarity denote $k=2^{l-1} m$. Then,
\begin{displaymath}
S^{(n)} (1) = - S^{(k)}(1)^2+2^{k+1} \equiv -1 \pmod{2^{l+2}},
\end{displaymath}
being $k+1=2^{l-1}m+1 \geq 2^{l-1} 3+1 \geq l+3$.
As regards the second congruence, since $S^{(k)} (1) \equiv -1 + 2^{l+1} \pmod{2^{l+2}}$, then $S^{(k)} (1)^2 \equiv 1 - 2^{l+2} \pmod{2^{l+3}}$. Hence,
\begin{displaymath}
S^{(n)} (1) \equiv -1+2^{l+2}+2^{k+1} \equiv -1+2^{l+2} \pmod{2^{l+3}}. 
\end{displaymath}

Now we deal with the case $n=2^l$, where $l \geq 3$.

When $n=8$, namely $l=3$, the Kloosterman sum $S^{(8)}(1) = 31$. Hence we are done.
Suppose that the thesis holds for some integer $l-1$ greater than or equal to 3. Let $n=2k$, where $k=2^{l-1}$. Then,
\begin{displaymath}
S^{(2k)} (1) = -S^{(k)}(1)^2+2^{k+1} \equiv -1 \pmod{2^{l+2}},
\end{displaymath}
since $k+1=2^{l-1}+1 \geq l+3$ for $l \geq 4$.

The second congruence holds too, since
\begin{displaymath}
S^{(2k)} (1) = -S^{(k)}(1)^2+2^{k+1} \equiv -1+2^{l+2} \pmod{2^{l+3}}.
\end{displaymath}
\end{proof}

\section{The structure of the  group of the rational points of an elliptic curve over a finite field}
In this Section  we will briefly recall some results concerning the number of rational points and the structure of the group of rational points of an elliptic curve defined over a finite field of arbitrary characteristic. More details can be found for example in \cite{Ruck} or \cite{wi}.

If $E$ is an elliptic curve defined over a finite field $\F_q$ of characteristic $p$, then the structure of the group $E(\F_{q^k})$ of the rational points of $E$ over $\F_{q^k}$ is strictly related to the ring $\End_{\Fq}(E)$ of the endomorphisms of $E$ over $\F_q$. 
Among all the endomorphisms, the Frobenius endomorphism $\pi_q$ plays a special role, as we will see later. It maps a point $P = (x_P,y_P)$ of $E$ to $(x_P^q, y_P^q)$. Note in passing that the ring $\Z$ of integers can be viewed as a subring of $\End_{\F_q} (E)$.

The following Theorem holds.
\begin{theorem}\label{num_poi}
Let $E$ be an elliptic curve defined over a finite field $\F_q$ and $h$ the number of rational points of $E$ over $\F_q$. Then,
\begin{equation*}
h = 1+q-\beta,
\end{equation*}
where $\beta$ is an integer with $\lvert \beta \rvert \leq 2 \sqrt{q}$. Moreover, if $(\beta, p)=1$, then $\Q (\pi_q)$ is an imaginary quadratic field over $\Q$ and all the orders in $\Q (\pi_q)$ are possible endomorphism rings of $E$ over $\F_q$. 
\end{theorem}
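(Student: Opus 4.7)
The plan is to exploit the Frobenius endomorphism $\pi_q \colon E \to E$, whose fixed points among the geometric points of $E$ are exactly the elements of $E(\Fq)$. Since Frobenius is purely inseparable with zero derivative, the isogeny $\pi_q - 1$ has derivative $-1 \neq 0$ and is therefore separable; hence $h = \#\ker(\pi_q - 1) = \deg(\pi_q - 1)$. Defining $\beta := 1 + q - \deg(\pi_q - 1)$ then gives $h = 1 + q - \beta$ tautologically, with $\beta \in \Z$.

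To obtain the Hasse bound together with the characteristic polynomial of $\pi_q$, I would use that the degree extends to a positive-definite $\Z$-valued quadratic form on $\End_{\Fq}(E)$, with symmetric bilinear pairing $\langle \phi, \psi \rangle := \deg(\phi + \psi) - \deg \phi - \deg \psi$. Expanding $\deg(\pi_q - n) \geq 0$ as a quadratic polynomial in $n \in \Z$ recovers the relation $\pi_q^2 - \beta \pi_q + q = 0$ in $\End_{\Fq}(E)$; non-negativity at every real $n$ then forces the discriminant inequality $\beta^2 - 4q \leq 0$, i.e.\ $|\beta| \leq 2 \sqrt{q}$.

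For the second assertion, suppose $\gcd(\beta, p) = 1$. Then $\beta^2 - 4q < 0$ strictly (equality would force $p \mid \beta$), so $\Q(\pi_q) = \Q(\sqrt{\beta^2 - 4q})$ is an imaginary quadratic field. The hypothesis $p \nmid \beta$ also characterizes $E$ as ordinary (supersingularity being exactly $p \mid \beta$). For ordinary elliptic curves Deuring's classification then asserts that $\End_{\Fq}(E)$ is an order in $\Q(\pi_q)$ containing $\Z[\pi_q]$, and conversely every such order arises as the endomorphism ring of some curve $\Fq$-isogenous to $E$; this is obtained via canonical lifts to characteristic zero followed by transport of the CM orders back down.

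The main obstacle is the positive-definiteness of the degree form, which underpins both the integrality of $\beta$ and the Hasse bound; it is not a formality and is usually derived from intersection theory on $E \times E$ or via the Weil pairing on $\ell$-torsion for $\ell \neq p$. A second substantial ingredient is Deuring's lifting theorem for ordinary curves, whose proof requires deformation theory and the main theorem of complex multiplication. Since these are classical results, in practice I would simply invoke \cite{Ruck} or Silverman's standard textbook rather than reprove them.
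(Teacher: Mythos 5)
The paper offers no proof of this theorem: it is quoted as a known classical result (Hasse's bound together with the Deuring--Waterhouse description of endomorphism rings), with the reader referred to \cite{Ruck} and \cite{wi}. Your outline is the standard textbook route and is essentially sound, so there is nothing to compare it against except the citation itself; like the paper, you ultimately defer the two genuinely hard ingredients (positive-definiteness of the degree form and Deuring's lifting theorem) to the literature, but the skeleton you supply around them is the correct one.

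One step as written is imprecise. From $\deg(\pi_q - n) = n^2 - \beta n + q \ge 0$ for \emph{integer} $n$ alone you cannot pass to ``non-negativity at every real $n$''; integer test points only yield $\beta^2 \le 4q+1$ (the parabola's minimum at $n=\beta/2$ need not be an integer when $\beta$ is odd). The standard repair is to use the two-variable quadratic form $\deg(m - n\pi_q) = m^2 - \beta m n + q n^2 \ge 0$ for all $m,n \in \Z$, i.e.\ non-negativity of $r^2 - \beta r + q$ at all rationals $r = m/n$, whence by density at all reals, giving $\beta^2 \le 4q$. Two further small points: the relation $\pi_q^2 - \beta\pi_q + q = 0$ is cleanest via the dual isogeny ($\pi_q + \overline{\pi}_q = \beta$, $\pi_q\overline{\pi}_q = q$), which is what your bilinear-pairing expansion amounts to; and in the last clause your formulation (orders \emph{containing} $\Z[\pi_q]$, realized on curves isogenous to $E$) is in fact the precise form of Waterhouse's theorem, of which the paper's phrase ``all the orders in $\Q(\pi_q)$'' is a slightly loose paraphrase. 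None of this affects the paper, which only ever applies the theorem to the ordinary curve $\Kob$ over $\F_2$, where $\beta = -1$.
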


We remind that an order $\mathcal{D}$ in a number field $\K$ is a subring of $\K$ such that $\K$ is its quotient field, $\mathcal{D} \cap \Q = \Z$ and the additive group of $\mathcal{D}$ is finitely generated.

The structure of the group  $E(\F_{q^k})$ of rational points over $\F_{q^k}$ of an elliptic curve defined over a finite field $\F_q$ such that $\pi_q \not \in \Z$ is as follows. 
\begin{theorem}\label{iso_ell_comp}
Let $E$ be an elliptic curve defined over $\F_q$, $R = \End_{\Fq} (E)$ and $\F_{q^k}$ the field with ${q^k}$ elements. If $\pi_q \not \in \Z$, then there is an isomorphism
\begin{displaymath}
E(\F_{q^k}) \cong R/(\pi_q^k -1) R
\end{displaymath}
of $R$-modules.
\end{theorem}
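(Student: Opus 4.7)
The plan is to identify $E(\F_{q^k})$ with the kernel of $\pi_q^k - 1$ acting on $E(\overline{\F_q})$, interpret this kernel as an $R$-module, and then promote the obvious equality of cardinalities into a genuine $R$-module isomorphism with $R/(\pi_q^k - 1)R$.

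First I would observe that a geometric point $P$ is $\F_{q^k}$-rational if and only if $\pi_q^k(P) = P$, so with $\phi := \pi_q^k - 1 \in R$ one has $E(\F_{q^k}) = \ker \phi$. Since $\pi_q \notin \Z$, Theorem \ref{num_poi} places $R$ inside the imaginary quadratic field $K = \Q(\pi_q)$, so $R$ is commutative and $\ker \phi$ acquires a natural $R/\phi R$-module structure. Next comes a counting step: $\phi$ is separable (its derivative at the identity is $-1$, because the purely inseparable $\pi_q^k$ contributes zero), so $|\ker\phi| = \deg \phi = N_{K/\Q}(\phi)$, using the standard fact that for an endomorphism in an imaginary quadratic order the degree equals the norm. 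On the other side, $|R/\phi R| = |N_{K/\Q}(\phi)|$ because multiplication by $\phi$ on the rank-two $\Z$-lattice $R$ has determinant equal to that norm. Both groups therefore have the same order.

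The core of the proof is upgrading this numerical coincidence to an actual $R$-module isomorphism. I would proceed prime by prime via Tate modules. For $\ell$ distinct from the characteristic of $\F_q$, the Tate module $T_\ell E$ is a free $\Z_\ell$-module of rank $2$ carrying a faithful action of the $\Z_\ell$-order $R \otimes \Z_\ell$; the faithfulness combined with the matching rank-two constraint forces $T_\ell E$ to be $(R \otimes \Z_\ell)$-free of rank one. Reducing this free rank-one module modulo $\phi$ identifies the $\ell$-primary part of $E[\phi]$ with $(R/\phi R)\otimes \Z_\ell$ as $R$-modules, using the short exact sequence
\begin{equation*}
0 \to E[\phi] \to E(\overline{\F_q}) \xrightarrow{\phi} E(\overline{\F_q}) \to 0
\end{equation*}
which is exact by separability. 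Assembling these isomorphisms across all $\ell$ dividing $N_{K/\Q}(\phi)$, with the characteristic prime handled by the analogous statement for the $p$-divisible group, gives the desired isomorphism $E(\F_{q^k}) \cong R/(\pi_q^k - 1)R$.

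The main obstacle will be the freeness of $T_\ell E$ over $R \otimes \Z_\ell$ when $\ell$ divides the conductor of $R$ inside the maximal order of $K$, since there $R \otimes \Z_\ell$ is not a discrete valuation ring and the usual structure theorems do not apply directly. One remedy is to first prove the theorem for the maximal-order case, where the $\ell$-adic argument is painless, then descend to a general order via a conductor square; another is to note that over a finite commutative local ring a faithful module whose cardinality equals that of the ring is automatically cyclic. Either route, combined with the exact cardinality count from the second step, converts the bijection of cardinalities into an honest isomorphism of $R$-modules.
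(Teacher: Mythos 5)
The paper does not actually prove Theorem \ref{iso_ell_comp}; it is quoted from \cite{Ruck} and \cite{wi} (the statement is essentially Lenstra's theorem on the complex multiplication structure of elliptic curves over finite fields), so there is no in-paper argument to compare against and your sketch must stand on its own. Its architecture is the standard one and is sound in outline: identify $E(\F_{q^k})$ with $\ker(\pi_q^k-1)$, use separability of $\phi=\pi_q^k-1$ to get $\lvert\ker\phi\rvert=\deg\phi=N(\phi)=\lvert R/\phi R\rvert$, and then produce the $R$-module isomorphism prime by prime from freeness of $T_\ell E$ over $R\otimes\Z_\ell$. You also correctly locate the hard point at primes $\ell$ dividing the conductor of $R$.

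Neither of your proposed remedies closes that gap, however. The principle ``over a finite commutative local ring a faithful module of the same cardinality as the ring is cyclic'' is false: over $S=\F_p[x,y]/(x^2,xy,y^2)$ the module $M=\F_pe_1\oplus\F_pe_2\oplus\F_pe_3$ with $xe_1=ye_2=e_3$ and $xe_2=ye_1=xe_3=ye_3=0$ is faithful of cardinality $p^3=\lvert S\rvert$ but needs two generators. What actually rescues the argument is not finiteness or locality but the special fact that quadratic orders are Gorenstein (every \emph{proper} fractional ideal is invertible) --- and even then faithfulness of $T_\ell E$ is not enough: if $R'$ is the maximal order, then $R'\otimes\Z_\ell$ is a faithful $(R\otimes\Z_\ell)$-module, free of rank $2$ over $\Z_\ell$, yet it is never free of rank one over $R\otimes\Z_\ell$ when $\ell$ divides the conductor (the index of any cyclic submodule $(R\otimes\Z_\ell)m$ in $R'\otimes\Z_\ell$ is at least $[R'\otimes\Z_\ell:R\otimes\Z_\ell]>1$). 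So you must show that the multiplier ring of $T_\ell E$ is \emph{exactly} $R\otimes\Z_\ell$, not some larger order, and that is genuine arithmetic input --- essentially Tate's theorem that $\End_{\Fq}(E)\otimes\Z_\ell\to\End_{\Z_\ell[\pi_q]}(T_\ell E)$ is bijective, or Lenstra's kernel-ideal substitute for it. The ``conductor square'' alternative fares no better, since replacing $E$ by an isogenous curve with maximal endomorphism ring changes the curve, its Frobenius module and its point counts. Finally, the characteristic-$p$ component is not automatic either: one has to match the $p$-primary part of $E(\F_{q^k})$ (cyclic in the ordinary case, trivial in the supersingular case with $\pi_q\notin\Z$) with the $p$-part of $R/(\pi_q^k-1)R$, using that exactly one of the two factors of $2=\pi_2\overline{\pi}_2$ (in general $q=\pi_q\overline{\pi}_q$) acts separably. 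The skeleton of your proof is right, but the decisive step is still open as written.
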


\begin{theorem}\label{Frob_rep_theo}
In the same hypotheses of Theorem \ref{iso_ell_comp}, if $m = | E(\Fq) |$ and $d = (q+1-m)^2-4q$, then $d < 0$ and $\End_{\Fq} (E)$ is an order in $\Q (\sqrt{d})$.
\end{theorem}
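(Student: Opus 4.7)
The plan is to extract the integer $\beta = q+1-m$ from Theorem~\ref{num_poi} and use the fact that the Frobenius endomorphism $\pi_q$ satisfies the quadratic relation $\pi_q^2 - \beta \pi_q + q = 0$, a standard result from the theory of elliptic curves over finite fields. The Hasse bound $\lvert \beta \rvert \leq 2 \sqrt{q}$ then gives immediately $d = \beta^2 - 4q \leq 0$, so only the strict inequality and the identification of the endomorphism ring remain to be established.

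To upgrade $d \leq 0$ to $d < 0$, I would argue by contradiction: if $d = 0$, then $T^2 - \beta T + q$ has the double root $\beta/2$, so $\pi_q = \beta/2 \in \Q$. Since $\pi_q$ is an algebraic integer, this forces $\pi_q \in \Z$, contradicting the hypothesis $\pi_q \notin \Z$ carried over from Theorem~\ref{iso_ell_comp}. Hence $d < 0$, and the quadratic formula $\pi_q = (\beta + \sqrt{d})/2$ shows that $\Q(\pi_q) = \Q(\sqrt{d})$ is an imaginary quadratic field.

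For the second assertion, every element of $\End_{\Fq}(E)$ commutes with $\pi_q$, being an $\Fq$-endomorphism. By the classical dichotomy, the rational endomorphism algebra $\End_{\Fq}(E) \otimes \Q$ is either an imaginary quadratic field or an order in a definite quaternion algebra over $\Q$; in either case the centraliser of the non-central element $\pi_q$ is precisely $\Q(\pi_q) = \Q(\sqrt{d})$. Therefore $\End_{\Fq}(E) \subseteq \Q(\sqrt{d})$; since it is a subring of $\Q(\sqrt{d})$, finitely generated as a $\Z$-module, and contains $\Z[\pi_q]$, which already spans $\Q(\sqrt{d})$ over $\Q$, it is an order in $\Q(\sqrt{d})$ by the definition recalled just after Theorem~\ref{num_poi}.

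The main obstacle is simply to invoke cleanly the two background facts --- the characteristic equation $\pi_q^2 - \beta \pi_q + q = 0$ and the fact that $\End_{\Fq}(E)$ is exactly the centraliser of $\pi_q$ in the full endomorphism ring --- both of which are standard and already covered by the references \cite{Ruck}, \cite{wi}. Everything else is short algebraic manipulation.
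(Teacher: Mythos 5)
Your argument is correct and follows essentially the same route as the paper, which does not reproduce a proof but defers to \cite{wi}: the characteristic equation $\pi_q^2-\beta\pi_q+q=0$, the Hasse bound, the exclusion of $d=0$ via $\pi_q\notin\Z$ (using that the endomorphism algebra has no nonzero nilpotents, so the double root forces $\pi_q=\beta/2$), and the identification of $\End_{\Fq}(E)$ with the centraliser of $\pi_q$, hence with an order in $\Q(\pi_q)=\Q(\sqrt{d})$. No gaps worth flagging.
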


The proof of Theorem \ref{Frob_rep_theo}, which can be found in \cite{wi}, yields a representation of the $q$-Frobenius endomorphism as an element of $\Q (\sqrt{d})$, namely
\begin{equation}\label{Frob_rep}
\pi_q = \frac{q+1-m+\sqrt{d}}{2}.
\end{equation}

Consider now the Koblitz curve $\Kob$. The number of rational points of $\Kob$ over $\F_2$ is $4$. Hence the representation of the Frobenius endomorphism $\pi_2$ as an element of $\Q (\sqrt{-7})$ is
\begin{equation*}
\pi_2  =  \frac{-1+i \sqrt{7}}{2}.
\end{equation*} 

We remind that the ring of integers of $\Q (\sqrt{-7})$, which is also its unique maximal order, is $\Z [\omega]$, where 
\begin{displaymath}
\omega = \frac{1+i \sqrt{7}}{2}.
\end{displaymath}
In particular we can write $\pi_2 = -1+\omega$.
Since $\End (\Kp)$ is an order in $\Q (\sqrt{-7})$ and contains $\{ 1, \pi_2 \}$, then $\omega \in \End(\Kob)$. Therefore $R=\End(\Kob)= \Z [\omega]$ and the group of rational points of $\Kob$ over $\F_{2^n}$ is isomorphic to $R/(\pi_2^n -1) R$.   
The ring $R$ is euclidean with respect to the norm $N(a+b \omega) = (a+b \omega) (\overline{a+b \omega})$. In particular $N(\pi_2) = 2$, namely
\begin{equation}
\pi_2 \overline{\pi}_2 = 2 \in R,
\end{equation}  
where $2$ is the duplication map, seen as an endomorphism of $\Kp$.

If $P = (x_1,y_1) \in \Kp (\F_{2^n})$ and $2 P = (x_2, y_2)$, then  $x_2=x_1^2+(x_1^{-1})^2$. Therefore, if $\overline{\pi}_2 (P) =(x', y')$, then $x' = x_1 + x_1^{-1}$.

We have obtained that the conjugated of the Frobenius endomorphism takes the $x$-coordinate of a point $P \in \Kob$ to $\theta(x)$. Relying upon this consideration we can study the structure of the graph associated with the map $\theta$ over a finite field of characteristic two.

\section{The structure of the graphs}
In this Section we will describe the structure of the graphs associated with the map $\theta$. We define the orbit of an element $x \in \Pro (\F_{2^n})$, under the action of the map $\theta$, as the set
\begin{displaymath}
\mathcal{O} (x) = \left\{ \theta^k (x) : k \geq 0 \right\}.
\end{displaymath}
The point $x$ is said to be periodic if $\theta^k (x) = x$, for some positive integer $k$. The smallest such $k$ is called the period of $x$.

The following holds.
\begin{lemma}\label{orb_1}
Let $x \in \F_{2^{2n}}$ and $P = (x, y) \in \Kp (\F_{2^{2n}})$. Denote by $O$ the point at infinity of $\Kob$. Then,
\begin{displaymath}
(\pi_2^n + 1) P = O \Longleftrightarrow \left\{
\begin{array}{l}
x = 0 \\
\text{\emph{or}} \\
x \in \F_ {2^n} \textrm{ and } \Tr_{n}  (x+x^{-1}) = 1. 
\end{array}
\right.
\end{displaymath}
\end{lemma}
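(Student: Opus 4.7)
The plan is to translate the condition $(\pi_2^n + 1)P = O$ into $\pi_2^n(P) = -P$, use the explicit formulas for $\pi_2^n$ and for $-P$ on $\Kob$, and then reduce the resulting equation on the $y$-coordinate to a trace condition on the $x$-coordinate by iterating the curve equation under Frobenius.

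Concretely, first I would rewrite $(\pi_2^n + 1)P = O$ as $\pi_2^n(P) = -P$. Since $\pi_2^n(x,y) = (x^{2^n}, y^{2^n})$ and $-P = (x, x+y)$, this is equivalent to the pair of conditions $x^{2^n} = x$ and $y^{2^n} = x + y$. The first says exactly $x \in \F_{2^n}$. So the remaining task is to analyze the second condition under the hypothesis $x \in \F_{2^n}$.

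The case $x = 0$ is dispatched separately: the only such point on $\Kob$ is $(0,1)$, which is $2$-torsion (indeed $-P = (0, 0+1) = P$), and $\pi_2^n(P) = P$, so $(\pi_2^n + 1)P = 2P = O$. For $x \neq 0$ I would set $z = y/x \in \F_{2^{2n}}$. The curve equation $y^2 + xy = x^3 + 1$ becomes
\begin{equation*}
z^2 + z = x + x^{-2},
\end{equation*}
and the equation $y^{2^n} = x + y$ becomes, after dividing by $x = x^{2^n}$, the simple relation $z^{2^n} + z = 1$.

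The heart of the argument is then the computation of $z^{2^n}$. Writing $c = x + x^{-2} \in \F_{2^n}$, a straightforward induction on $k$ from $z^2 = z + c$ gives
\begin{equation*}
z^{2^k} = z + \sum_{i=0}^{k-1} c^{2^i},
\end{equation*}
because each squaring step contributes one more Frobenius-translate of $c$ while the $\F_{2^n}$-membership of $c$ keeps the sum closed. Specializing to $k = n$ yields $z^{2^n} = z + \Tr_n(c)$, so $z^{2^n} + z = \Tr_n(x + x^{-2}) = \Tr_n(x) + \Tr_n(x^{-1}) = \Tr_n(x + x^{-1})$, using the Frobenius invariance $\Tr_n(x^{-2}) = \Tr_n(x^{-1})$. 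The condition $z^{2^n} + z = 1$ is therefore precisely $\Tr_n(x + x^{-1}) = 1$, which establishes both implications simultaneously.

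I do not foresee a serious obstacle; the one point that deserves care is ensuring the second implication, since one must verify that when $x \in \F_{2^n}^{*}$ and $\Tr_n(x + x^{-1}) = 1$ a point $P = (x,y) \in \Kob(\F_{2^{2n}})$ actually exists and that \emph{any} such $P$ satisfies $(\pi_2^n + 1)P = O$. Existence follows from Lemma \ref{pre_1} together with the fact that a solution $y$ to the quadratic $y^2 + xy = x^3 + 1$ over $\F_{2^n}$ lies in $\F_{2^{2n}}$; and the computation above is indifferent to the choice of $y$ versus $y + x$, since both values of $z = y/x$ satisfy the same quadratic over $\F_{2^n}$ and the derivation of $z^{2^n} + z = \Tr_n(x + x^{-1})$ used only that $z$ is a root of $T^2 + T + c$.
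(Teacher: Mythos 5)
Your proof is correct. It establishes the same equivalence as the paper but by a genuinely more direct route. The paper's proof of Lemma~\ref{orb_1} is a rationality argument leaning on Lemma~\ref{pre_1}: it observes that $(\pi_2^n+1)P=O$ forces $x^{2^n}=x$, and then argues by contradiction in both directions --- if $\Tr_n(x+x^{-1})=0$ then $P$ is already rational over $\F_{2^n}$, so $\pi_2^n(P)=P=-P$ forces $x=0$; conversely if $\Tr_n(x+x^{-1})=1$ then $y^{2^n}\neq y$ by Lemma~\ref{pre_1}, and (implicitly) $y^{2^n}$ must then be the other admissible $y$-coordinate $x+y$, so $\pi_2^n(P)=-P$. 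You instead compute the action of $\pi_2^n$ on $z=y/x$ explicitly: iterating the Artin--Schreier relation $z^2=z+c$ with $c=x+x^{-2}\in\F_{2^n}$ gives the identity $z^{2^n}=z+\Tr_n(c)=z+\Tr_n(x+x^{-1})$, from which both implications drop out simultaneously as the single statement that $y^{2^n}=x+y$ iff $\Tr_n(x+x^{-1})=1$. This buys a unified, contradiction-free argument that in effect re-proves the content of Lemma~\ref{pre_1} along the way (your identity tells you exactly which element of $\{y,\,x+y\}$ the Frobenius produces, rather than merely that it is or is not equal to $y$); the cost is that it is slightly more computational and duplicates work the paper has already packaged into Lemma~\ref{pre_1}. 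One cosmetic remark: the existence discussion at the end is not needed for the lemma as stated, since $P=(x,y)\in\Kob(\F_{2^{2n}})$ is given as a hypothesis, though your justification of it is also correct.
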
 

\begin{proof}
Suppose that $(\pi_2^n+1) P = O$. Then, $\pi_2^n (P) = -P = (x, x+y)$ and $x^{2^n} = x$. Hence, $x \in \F_{2^n}$. If $x \not = 0$ and $\Tr_{n} (x + x^{-1}) = 0$, then $y \in \F_{2^n}$, as a consequence of Lemma \ref{pre_1}. But this implies that $(x, y) \in \F_{2^n}^2$. Therefore $\pi_2^n (P) = P$ and $(x,y) = P = -P = (x, x+y)$. Hence $x=0$ and $y = 1$, in contradiction with the assumption that $x \not = 0$. 

Conversely, suppose that $x \in \F_{2^n}^*$ and $\Tr_{n} (x + x^{-1}) =1$. This implies that $\pi_2^n (P) = (x, y^{2^n})$. If $y^{2^n} = y$, then $P \in \Kp (\F_{2^n})$. But this is in contradiction with Lemma \ref{pre_1}. Hence, $(\pi_2^n+1) P = O$.
\end{proof}

Our final goal is to describe the structure of the graph associated with the map $\theta$ over  $\Pro(\F_{2^n})$. As remarked in Section \ref{preliminaries}, this can be done describing separately the structure of the graphs associated with the maps $\theta_{A_n}$ and $\theta_{B_n}$ on the sets $A_n$ and $B_n$ respectively (see Section \ref{preliminaries} for the details). 
\begin{itemize}
\item \emph{Graph $A_n$.} 
We remind that, for each $x \in A_n \backslash \{ 0, \infty \}$, there exist two distinct points in $\Kob(\F_{2^n})$ having the same $x$-coordinate. Moreover $\Kp (\F_{2^n})$ is isomorphic to $R / (\pi_2^n-1) R$, being $R$ the ring of integers of $\Q (\sqrt{-7})$. 
\item \emph{Graph $B_n$.}
Let $x \in B_n$. In this case there are exactly two distinct points in $\Kp(\F_{2^{2n}})$ with such an $x$-coordinate. By Lemma \ref{orb_1}, $(\pi_2^n + 1) P = O$. Viceversa, if $P= (x,y) \in \Kp(\F_{2^{2n}})$ and $(\pi_2^n+1) P = O$, then $x \in \F_{2^n}$ and $\Tr_{n} (x+x^{-1}) =1$ or $P=(0,1)$. Hence, there is an isomorphism 
\begin{equation*}
\widetilde{\psi} : \Kp(\F_{2^{2n}})_{B_n} \to R / (\pi_2^n+1) R,
\end{equation*}
where 
\begin{equation*}
\Kp(\F_{2^{2n}})_{B_n} = \{ (x,y) \in \Kp(\F_{2^{2n}}) : x \in B_n \text{ or } x = 0 \} \cup O.
\end{equation*}
\end{itemize} 

Before dealing with graphs $A_n$ and $B_n$ we recall just some facts about the ring $\Z[\omega]$, which is the ring of integers of the quadratic number field $\Q(\sqrt{-7})$. Since the ring $\Z[\omega]$ is euclidean, it is also a unique factorization domain. Moreover, the only positive rational prime which ramifies in $R$ is $7$, while all other (positive) rational primes either split in $R$ or are inert. 

We can factor the element $\pi_2^n-1$ (resp. $\pi_2^n+1$) in primes of $R$. Notice that $\pi_2$ divides neither $\pi_2^n-1$ nor $\pi_2^n+1$. 

Suppose that  $\pi_2^n-1$ (resp. $\pi_2^n+1$) factors as
\begin{equation*}
\overline{\pi}_2^{e_0} \cdot \left( \prod_{i=1}^v p_i^{e_i} \right) \cdot \left( \prod_{i = v+1}^w r_i^{e_i} \right) \cdot (\sqrt{-7})^f,
\end{equation*}
where 
\begin{enumerate}
\item all $e_i$ and $f$ are non-negative integers;
\item  for $1 \leq i \leq v$ the elements $p_i \in \Z$ are distinct primes of $R$ and $N( p_i^{e_i} ) = p_i^{2 e_i}$;
\item for $v+1 \leq i \leq w$ the elements $r_i \in R \backslash \Z$  are distinct primes of $R$, different from $\pi_2, \overline{\pi}_2$ and $\sqrt{-7}$,  and $N( r_i^{e_i} ) = p_i^{e_i}$, for some rational integer $p_i$ such that $r_i \overline{r}_i = p_i$.
\end{enumerate}

The ring $R / (\pi_2^n - 1) R$ (resp. $R / (\pi_2^n + 1) R$) is isomorphic to 
\begin{equation}\label{orb4}
R / \overline{\pi}_2^{e_0} R \times \left( \prod_{i=1}^v R / p_i^{e_i} R \right) \times \left( \prod_{i = v+1}^w R / r_i^{e_i} R \right) \times R /(\sqrt{-7})^f R.
\end{equation}

For any $1 \leq i \leq v$ the additive group of $R / p_i^{e_i} R$ is isomorphic to the direct sum of two cyclic groups of order $p_i^{e_i}$. This implies that, for each integer $0 \leq h_i \leq e_i$, there are $N_{h_i}$ points in $R / p_i^{e_i} R$ of order $p_i^{h_i}$, where
\begin{displaymath}
N_{h_i} = \left\{
\begin{array}{ll}
1 & \text{ if $h_i = 0$}\\
{p_i}^{2 h_i} - {p_i}^{2(h_{i}-1)} & \text{ otherwise.}
\end{array}
\right.
\end{displaymath}

For any $v+1 \leq i \leq w$ the additive group of $R / r_i^{e_i} R$ is cyclic of order $p_i^{e_i}$. Hence, there are $\phi(p_i^{h_i})$ points in $R / r_i^{e_i} R$ of order $p_i^{h_i}$, for each integer $0 \leq h_i \leq e_i$.

Finally, the additive group of $R / (\sqrt{-7})^f R$ is isomorphic to the direct sum of two cyclic groups of order $7^{f/2}$, if $f$ is even, or to the direct sum of two cyclic groups of order respectively $7^{(f-1)/2}$ and $7^{(f+1)/2}$, if $f$ is odd. In the case that $f$ is even, for each integer $0 \leq h_f \leq f/2$ there are $N_{h_f}$ points in $R/({\sqrt{-7}})^{f} R$ of order $7^{h_f}$, where  
\begin{displaymath}
N_{h_f} = \left\{
\begin{array}{ll}
1 & \text{ if $h_f = 0$}\\
7^{2 h_f} - 7^{2(h_{f}-1)} & \text{ otherwise.}
\end{array}
\right.
\end{displaymath}
If, on the contrary, $f$ is odd, then  
\begin{displaymath}
N_{h_f} = \left\{
\begin{array}{ll}
1 & \text{ if $h_f = 0$}\\
7^{2 h_f} - 7^{2(h_{f}-1)} & \text{ if $1 \leq h_f \leq (f-1)/2$}\\
7^{2 h_f-1} - 7^{2(h_f-1)} & \text{ if $h_f = (f+1)/2$}
\end{array}
\right.
\end{displaymath}

An element $x \in A_n \backslash \{0, \infty \}$ (resp. $B_n$), which is periodic under the action of the map $\theta$, is the $x$-coordinate of a rational point of $\Kob (\F_{2^n})$ (resp. $\Kob (\F_{2^{2n}})_{B_n})$, which corresponds to a point of the form $P = (0, P_1, \dots, P_w, P_f) \in R / (\pi_2^n-1) R$ (resp. $R/ (\pi_2^n+1) R$). Each $P_i$, for $1 \leq i \leq w$, has order $p_i^{h_i}$, for some integer $0 \leq h_i \leq e_i$. Moreover, $P_f$ has order $h_f$, for some integer such that $0 \leq h_f \leq f/2$ if $f$ is even or $0 \leq h_f \leq (f+1)/2$ if $f$ is odd.
For any $P_i$ let $l_i$ be the smallest among the  positive integers $k$ such that $[\overline{\pi}_2]^k P_i = P_i$ or $-P_i$.  In a similar way, we define $l_f$ to be the smallest among the positive integers $k$ such that $[\overline{\pi}_2]^k P_f = P_f$ or $-P_f$.
\begin{itemize}
\item If $1 \leq i \leq v$, then $l_i$ is the smallest among the positive integers $k$ such that $p_i^{h_i}$ divides  $\overline{\pi}_2^{k} + 1$ or $\overline{\pi}_2^{k} - 1$ in $R$.
\item If $v+1 \leq i \leq w$, then $l_i$ is the smallest among the positive integers $k$ such that $r_i^{h_i}$ divides  $\overline{\pi}_2^{k} + 1$ or $\overline{\pi}_2^{k} - 1$ in $R$.
\item The integer $l_f$ is the smallest among the positive integers $k$ such that $\sqrt{-7}^{h_f}$ divides  $\overline{\pi}_2^k + 1$ or $\overline{\pi}_2^k - 1$ in $R$.
\end{itemize}

Let
\begin{displaymath}
l = \lcm (l_1, \dots, l_w, l_f).
\end{displaymath}
We introduce parameters $\epsilon_i$, for $1 \leq i \leq w$, and $\epsilon_f$ such defined: 
\begin{displaymath}
\epsilon_i = \left\{
\begin{array}{ll}
1 & \textrm{if $[\overline{\pi}_2]^{l_i} P_i = P_i$}\\
0 & \textrm{if $[\overline{\pi}_2]^{l_i} P_i = - P_i$}.
\end{array}
\right.
\quad
\epsilon_f = \left\{
\begin{array}{ll}
1 & \textrm{if $[\overline{\pi}_2]^{l_f} P_f = P_f$}\\
0 & \textrm{if $[\overline{\pi}_2]^{l_f} P_f = - P_f$}.
\end{array}
\right.
\end{displaymath}
Let 
\begin{displaymath}
\epsilon = \left\{
\begin{array}{ll}
0 & \textrm{if any $\epsilon_i =1$ and $\epsilon_f = 1$ or any $\epsilon_i = 0$ and $\epsilon_f = 0$}\\
1 & \textrm{otherwise}.
\end{array}
\right.
\end{displaymath}
Then, the period of $x$ with respect to $\theta$ is $2^{\epsilon} \cdot l.$

We note that the number of points $P = (0, P_1, \dots, P_w, P_f)$ in $R/(\pi_2^n -1) R$ (resp. $R/(\pi_2^n+1) R$), where each $P_i$ has order $p_i^{h_i}$ and $P_f$ has order $7^{h_f}$, is 
\begin{displaymath}
m = \left( \prod_{i=1}^{v} N_{h_i} \right)  \cdot \left( \prod_{i=v+1}^{w} \phi(p_i^{h_i})  \right) \cdot N_{h_f}
\end{displaymath} 
Let $P = \psi(x,y)$ (resp. $\widetilde{\psi} (x,y)$) be one of such points. The period $l$ of $x$ can be calculated as above. In particular, we note that also $-P = \psi(x,x+y)$ (resp. $\widetilde{\psi} (x,x+y)$) has the same additive order in $R / (\pi_2^n-1)R$ (resp. $R/(\pi_2^n+1) R$).  This amounts to saying that the $m$ points give rise to 
$\left\lceil \dfrac{m}{2l} \right\rceil$    
cycles of length $l$.

Now we define the sets $Z_{e_i} = \{0, 1, \dots, e_i \}$, for any $1 \leq i \leq w$, and $Z_f = \left\{0, 1, \dots, f/2 \right\}$ if $f$ is even or $Z_f = \left\{0, 1, \dots, (f+1)/2 \right\}$ if $f$ is odd. Let
\begin{equation*}
H = \prod_{i=1}^{w} Z_{e_i} \times Z_f.
\end{equation*}

For any $h \in H$ denote by $C_h$ the set of all cycles formed by the elements $x \in A_n$ (resp. $B_n$) such that $(x,y) \in \Kob (\F_{2^n})$ (resp. $\Kob (\F_{2^{2n}})_{B_n}$) for some $y \in \F_{2^n}$ (resp. $\F_{2^{2n}}$) and $\psi(x,y) = P = (0, P_1, \dots, P_w, P_f) \in R / (\pi_2^n-1) R$ (resp. $\widetilde{\psi} (x,y) = P \in R / (\pi_2^n+1) R$), where
\begin{itemize}
\item each $P_i$, for $1 \leq i \leq v$, has additive order $p_i^{h_i}$ in $R / p_i^{e_i} R$;
\item each $P_i$, for $v+1 \leq i \leq w$, has additive order $p_i^{h_i}$ in $R / r_i^{e_i} R$;
\item $P_f$ has additive order $7^{h_f}$ in $R/(\sqrt{-7})^f R$.
\end{itemize}  
Let $l_h$ be the length of the cycles formed by these points. 
Finally, denote by $C_{A_n}$ the set of all cycles in graph $A_n$ and by $C_{B_n}$ the set of all cycles in graph $B_n$.

The following holds.
\begin{lemma}
With the above notation, $C_{A_n}$ (resp. $C_{B_n}$) is equal to $\displaystyle\bigcup_{h \in H} C_h$, being 
\begin{displaymath}
\lvert C_{h} \rvert = \frac{1}{2 l_h} \left( \prod_{i=1}^{v} N_{h_i} \right)  \cdot \left( \prod_{i=v+1}^{w} \phi(p_i^{h_i}) \right) \cdot N_{h_f}
\end{displaymath}
for any non-zero $h \in H$.
\end{lemma}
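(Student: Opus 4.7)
Plan: The plan is to realize $\theta$-cycles as orbits of $\overline{\pi}_2$-multiplication on unordered pairs $\{P,-P\}$ in $R/(\pi_2^n\mp 1)R$ (via $\psi$ for $A_n$ and $\widetilde{\psi}$ for $B_n$), then count orbit-by-orbit using the CRT decomposition.

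First, using the factorization displayed in (\ref{orb4}), observe that in the factor $R/\overline{\pi}_2^{e_0}R$ the element $\overline{\pi}_2$ is nilpotent, so its only $\overline{\pi}_2$-periodic element is $0$; in every other CRT factor $\overline{\pi}_2$ is coprime to the modulus (since $p_i\ne 2$, the $r_i$ are distinct from $\pi_2,\overline{\pi}_2,\sqrt{-7}$, and $\sqrt{-7}$ lies above the odd prime $7$) and hence acts as a unit. Consequently every periodic $P$ has the form $(0,P_1,\dots,P_w,P_f)$, and since multiplication by a unit and negation both preserve additive orders in each factor, the profile $h=(h_1,\dots,h_w,h_f)$ is invariant along a $\theta$-cycle. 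This gives the disjoint decomposition $C_{A_n}=\bigcup_{h\in H}C_h$ (and analogously for $C_{B_n}$).

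Next, I would check that $l_h=2^{\epsilon}l$ depends only on $h$: each $l_i$ is the least $k$ with $p_i^{h_i}\mid\overline{\pi}_2^k\pm 1$, and exactly one sign can occur since otherwise the odd prime power $p_i^{h_i}$ would divide $2$; the analogous argument handles $l_f$ using that $\sqrt{-7}$ lies over $7$. So $l_h$ is the common period of every $x\in C_h$. Together with the previously recorded structural descriptions of the local factors $R/p_i^{e_i}R$, $R/r_i^{e_i}R$ and $R/(\sqrt{-7})^fR$, this gives that the number of tuples of profile $h$ is exactly $m=\prod_{i=1}^{v}N_{h_i}\cdot\prod_{i=v+1}^{w}\phi(p_i^{h_i})\cdot N_{h_f}$.

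Finally, I would carry out the orbit count. For non-zero $h$ at least one component has odd-prime-power order, so $P\ne -P$; moreover the $2l_h$ points $\{\pm\overline{\pi}_2^k P:0\le k<l_h\}$ are pairwise distinct, because an intra-orbit coincidence $\overline{\pi}_2^jP=\overline{\pi}_2^kP$ with $j<k<l_h$ would violate minimality of the period, while $\overline{\pi}_2^kP=-\overline{\pi}_2^jP$ would give $\overline{\pi}_2^{k-j}P=-P$ and hence $\theta^{k-j}(x)=x$, contradicting that $l_h$ is the period of $x$. The $m$ points of profile $h$ therefore partition into exactly $m/(2l_h)$ disjoint $\theta$-orbits, which yields $\lvert C_h\rvert=m/(2l_h)$. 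The main obstacle is this last orbit-disjointness step, where the definition of $\epsilon$ must be invoked precisely to rule out a period strictly dividing $l_h$.
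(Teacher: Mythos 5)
Your proposal is correct and follows essentially the same route as the paper, which states this lemma without a separate proof because it is a direct summary of the counting discussion immediately preceding it (the $m$ points of a given order profile, the invariance of the profile under $\overline{\pi}_2$ and negation, and the pairing $P \leftrightarrow -P$ of points with the same $x$-coordinate). Your writeup merely makes explicit the details the paper leaves implicit, in particular why $l_h$ depends only on $h$ and why $2l_h$ divides $m$.
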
 

In the following we will denote by $V_{A_n}$ (respectively $V_{B_n}$) the set of the elements of $\F_{2^n}$ belonging to some cycle of $C_{A_n}$ (respectively $C_{B_n}$).
Before characterizing the trees rooted in vertices of $V_{A_n}$ (respectively $V_{B_n}$), we observe that
$R / \overline{\pi}_2^{e_0} R $ consists of the  elements
$\displaystyle\sum_{i=0}^{e_0-1} j_i \cdot [\overline{\pi}_2]^{i}$,
where each $j_i$ is $0$ or $1$ (see \cite{gil} for more details).

\subsection{Trees rooted in vertices  of $V_{A_n}$}
The following Lemma characterizes the reversed trees having root in $V_{A_n}$.
\begin{lemma}\label{orb_2}
Any element $x \in V_{A_n}$ is the root of a reversed binary tree of depth $e_0$ with the following properties.
\begin{itemize}
\item If $x \not = \infty$, then there are $\lceil 2^{k-1} \rceil$ vertices at the level $k$ of the tree.  Moreover, the root has one child, while all other vertices have two children.
\item If $x = \infty$, then there are $\lceil 2^{k-2} \rceil$ vertices at the level $k$ of the tree. Moreover, the root and the vertex at the level 1 have one child, while all other vertices have two children. 
\item If $l$ is the greatest power of 2 which divides $n$, then $e_0=l+2$. 
\end{itemize}
\end{lemma}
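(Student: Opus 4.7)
The plan is to transfer everything to $\Kp(\F_{2^n}) \cong R/(\pi_2^n-1)R$ via $\psi$ and exploit the fact that on $x$-coordinates the endomorphism $\overline{\pi}_2$ descends to $\theta$ after the identification $P \sim -P$. In the product decomposition (\ref{orb4}), multiplication by $\overline{\pi}_2$ is a unit on every factor except $R/\overline{\pi}_2^{e_0}R$, where the kernel is the order-$2$ subgroup $\overline{\pi}_2^{e_0-1} R / \overline{\pi}_2^{e_0}R$ and the image has index $2$. Consequently $P$ is $\overline{\pi}_2$-periodic iff its first component $P_0$ vanishes, and the reversed tree attached to any periodic point is governed entirely by the shift of $\overline{\pi}_2$ on the additive group $R/\overline{\pi}_2^{e_0}R$ described just before the statement.

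First I would count preimages inside $\Kp(\F_{2^n})$. For periodic $P$ with $P_0 = 0$, a point $Q$ satisfying $\overline{\pi}_2^k Q = P$ must have $Q_i = \overline{\pi}_2^{-k} P_i$ uniquely determined on each factor where $\overline{\pi}_2$ is invertible, while $Q_0$ runs over the elements of $R/\overline{\pi}_2^{e_0}R$ killed by $\overline{\pi}_2^k$, a set of cardinality $2^{\min(k,e_0)}$. Subtracting the preimages already accounted for at lower levels yields $2^{k-1}$ new vertices at exact level $k$ for $1 \leq k \leq e_0$ and none beyond. This already gives depth $e_0$ and branching pattern $1,1,2,4,\dots,2^{e_0-1}$ on $\Kp$.

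Next I would descend to $A_n$ through $P \sim -P$. The only $2$-torsion points of $\Kp(\overline{\F}_2)$ are $O$ and $(0,1)$, which correspond to the components $(0,0,\dots,0)$ and $(\overline{\pi}_2^{e_0-1},0,\dots,0)$ respectively. For $x \neq \infty$ the root $P$ is non-$2$-torsion, and this property is inherited by every descendant in its $\overline{\pi}_2$-tree (they all share some non-zero $P_i$ with $i \geq 1$); therefore $Q \mapsto -Q$ is a free involution pairing tree$(P)$ bijectively with tree$(-P)$, and the $x$-coordinate count at level $k$ halves to $\lceil 2^{k-1}\rceil$, the root losing one of its two $\overline{\pi}_2$-preimages to the cycle predecessor and every deeper non-leaf vertex acquiring two children. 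For $x = \infty$ the trees at $O$ and $-O$ coincide; here $(0,1)$ is a $2$-torsion vertex at level $1$, its two $\Kp$-preimages already share a single $x$-coordinate, hence $0$ has only one child. From level $2$ onward all vertices are non-$2$-torsion and the $\pm$-pairing becomes free again, yielding $2^{k-2}$ $x$-coordinates and two children per non-leaf vertex. The delicate point is ruling out accidental identifications $Q = -Q'$ between distinct siblings at deeper levels, which I expect to be the main obstacle: it reduces to showing that any $R$ with $2R = (0,1)$ has $R_i = 0$ for all $i \geq 1$ (and thus lives in the $O$-tree itself, causing no cross-tree identification), which is forced because $\pi_2$ and $\overline{\pi}_2$ are units on every factor outside the first.

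Finally I would prove $e_0 = l+2$ by computing $v_{\overline{\pi}_2}(\pi_2^n-1)$ directly. The starting identities $\pi_2 - 1 = \overline{\pi}_2^2$ and $\pi_2 + 1 = -\overline{\pi}_2$ give $v_{\overline{\pi}_2}(\pi_2-1) = 2$ and $v_{\overline{\pi}_2}(\pi_2+1) = 1$. For $n$ odd, the factorization $\pi_2^n - 1 = (\pi_2-1)\sum_{i=0}^{n-1}\pi_2^i$ combined with $\pi_2 \equiv -1 \pmod{\overline{\pi}_2}$ reduces the second factor to $n \equiv 1 \pmod{\overline{\pi}_2}$, so $v_{\overline{\pi}_2}(\pi_2^n-1) = 2$. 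For $n$ even, $\pi_2^n - 1 = (\pi_2^{n/2}-1)(\pi_2^{n/2}+1)$ and the congruence $\pi_2 \equiv 1 \pmod{\overline{\pi}_2^2}$ gives $\pi_2^{n/2} + 1 \equiv 2 \pmod{\overline{\pi}_2^2}$; since $v_{\overline{\pi}_2}(2) = 1 < 2$ this forces $v_{\overline{\pi}_2}(\pi_2^{n/2}+1) = 1$. Induction on $l$ then yields $v_{\overline{\pi}_2}(\pi_2^n-1) = 2 + l$, completing the proof.
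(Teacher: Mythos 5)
Your proposal is correct. For the tree structure itself (vertex counts per level, the one-child root, the special behaviour at level $1$ under $\infty$) you follow essentially the same route as the paper: work in $R/(\pi_2^n-1)R$ with the decomposition (\ref{orb4}), locate level $k$ via the $\overline{\pi}_2$-valuation of the first component, and quotient by $Q\mapsto -Q$, with the unique nontrivial $2$-torsion point $(0,1)=(\overline{\pi}_2^{e_0-1},0,\dots,0)$ accounting for all the exceptional identifications; your observation that $2Q=(0,1)$ forces $Q_i=0$ for $i\geq 1$ is exactly the fact the paper uses (in the form $\widetilde{Q}^{(1)}\neq -\widetilde{Q}^{(2)}$ unless all $P_i$ vanish). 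Where you genuinely diverge is the computation of $e_0$. The paper obtains $e_0=l+2$ from the $2$-adic congruences for the Kloosterman sum $S^{(n)}(1)$ (Lemma \ref{Sncon}), which in turn rest on the analytic identity $S^{(2n)}(1)=-S^{(n)}(1)^2+2^{n+1}$ and require the cases $n=2,4,8$ to be checked separately. You instead compute the $\overline{\pi}_2$-adic valuation of $\pi_2^n-1$ directly from the identities $\pi_2-1=\overline{\pi}_2^{\,2}$ and $\pi_2+1=-\overline{\pi}_2$: for odd $n$ the cyclotomic factor $\sum_{i=0}^{n-1}\pi_2^i$ is a unit modulo $\overline{\pi}_2$ (since $\pi_2\equiv-1$), giving valuation $2$, and each doubling $n\mapsto 2n$ contributes exactly $1$ because $\pi_2^{n}+1=(\pi_2^{n}-1)+2$ with $v_{\overline{\pi}_2}(2)=1<2\leq v_{\overline{\pi}_2}(\pi_2^n-1)$. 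This is more elementary and uniform — no Kloosterman machinery, no exceptional small cases — and as a byproduct it reproves the $2$-adic part of the paper's Lemma \ref{Sncon} via $\lvert\Kob(\F_{2^n})\rvert=N(\pi_2^n-1)$ rather than relying on it. Both arguments are sound; yours is the more self-contained for this particular purpose, while the paper's keeps the Kloosterman sums in the foreground because they also govern the cardinalities elsewhere in the text.
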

\begin{proof}
For a fixed element $x \in V_{A_n}$, let $(0, P_1, \dots, P_w, P_f) \in R / (\pi_2^{n}-1) R$ be one of the (at most two) points with such an $x$-coordinate. An element $\tilde{x} \in \F_{2^n}$ belongs to the non-zero level $k$ of the reversed binary tree rooted in $x$ if and only if $\theta^k(\tilde{x}) = x$, $\theta^i (\tilde{x}) \not = x$ and none of the $\theta^i (\tilde{x})$ is periodic for any $i < k$. Since $\tilde{x} \in A_n$ (see Lemma \ref{tr_inverse} and the subsequent Remark), there exists $\tilde{y} \in \F_{2^n}$ such that $(\tilde{x}, \tilde{y}) \in \Kob (\F_{2^n})$ and $\psi(\tilde{x}, \tilde{y}) = Q = (Q_0, Q_1, \dots, Q_w, Q_f)$, where $Q_0 \not =0$. Moreover, since $[\overline{\pi}_2]^{e_0} Q_0 = 0$ in $R / \overline{\pi}_2^{e_0} R$, we have that $k \leq e_0$.

For a fixed positive integer $k \leq e_0$ we aim to find all  points $(Q_0, Q_1,  \dots, Q_w, Q_f)$ in $R /(\pi_2^{n}-1) R $ such that 
\begin{enumerate}
\item $[\overline{\pi}_2]^k Q_0  =  0$ and $[\overline{\pi}_2]^{k-1} Q_0 \not = 0$ ;
\item $[\overline{\pi}_2]^k Q_i  =   P_i$ for any $1 \leq i \leq w$ and $[\overline{\pi}_2]^k Q_f = P_f$, or $[\overline{\pi}_2]^k Q_i  =   -P_i $ for any $1 \leq i \leq w$ and $[\overline{\pi}_2]^k Q_f = -P_f$. 
\end{enumerate}
The first condition is satisfied if and only if
\begin{equation}\label{q0}
Q_0  = [\overline{\pi}_2]^{e_0-k} + \sum_{i=e_0-k+1}^{e_0-1} j_i [\overline{\pi}_2]^i, 
\end{equation}
where each $j_i \in \left\{ 0, 1 \right\}$.
The second condition is satisfied if and only if 
\begin{equation*}
Q_i  =  [\overline{\pi}_2]^{-k} P_i \text{, for any $i$, and } Q_f = [\overline{\pi}_2]^{-k} P_f  
\end{equation*}
or
\begin{equation*}
Q_i  =  -[\overline{\pi}_2]^{-k} P_i \text{, for any $i$, and } Q_f = - [\overline{\pi}_2]^{-k} P_f . 
\end{equation*}

Hence, fixed the values of $j_i$ for $e_0-k+1 \leq i \leq e_0-1$, there are at most two possibilities for $Q$, namely
\begin{eqnarray*}
Q^{(1)} & = & (Q_0, [\overline{\pi}_2]^{-k} P_1, \dots, [\overline{\pi}_2]^{-k} P_w, [\overline{\pi}_2]^{-k} P_f) \quad \textrm{or}  \\
Q^{(2)} & = & (Q_0, -[\overline{\pi}_2]^{-k} P_1, \dots, -[\overline{\pi}_2]^{-k} P_w,-[\overline{\pi}_2]^{-k} P_f) .
\end{eqnarray*}
Therefore, for a fixed positive integer $k$ there are $2^k$ points $Q$, whose $x$-coordinate belongs to the level $k$ of the tree, provided that not all $P_i$ are zero (in which case $ x = \infty$). If $Q = \psi(\tilde{x}, \tilde{y})$ is one of such points, then $-Q=\psi(\tilde{x}, \tilde{x}+\tilde{y})$ has the same $x$-coordinate. Hence, for any $x \in C_{A_n}$ different from $\infty$ there are $2^{k-1}$ vertices at the level $k$ of the reversed binary tree rooted in $x$.

If all $P_i$ are zero (and $x = \infty$), then the points $Q^{(1)}$ and $Q^{(2)}$ coincide. Therefore, for any $k>0$ there are $2^{k-1}$ points, whose $x$-coordinate belongs to the level $k$ of the tree. Moreover, if $Q$ is one of such points, also $-Q$ has the same $x$-coordinate and is different from $Q$, unless $Q$ is the only point belonging to the first level of the tree. This amounts to say that there are $\lceil 2^{k-2} \rceil$ vertices at the level $k$ of the tree.

Consider now an element $\tilde{x}$ belonging to the  level $k < e_0$ of the tree rooted in some $x \in V_{A_n}$. Such an $\tilde{x}$ is the $x$-coordinate of a point $Q=(Q_0, Q_1, \dots, Q_w, Q_f)$ in $R/(\pi_2^n-1) R$, for some $Q_0$ as in (\ref{q0}) or $Q_0 = 0$. The equation $z+z^{-1} = \tilde{x}$ is satisfied for at most two $z$ in $\F_{2^n}$, which are the $x$-coordinate of two points in $R/(\pi_2^n-1)R$,
\begin{equation*}
\widetilde{Q}^{(1)} = (\widetilde{Q}_0^{(1)}, \widetilde{Q}_1, \dots, \widetilde{Q}_w, \widetilde{Q}_f) \textrm{ and } \widetilde{Q}^{(2)} = (\widetilde{Q}_0^{(2)}, \widetilde{Q}_1, \dots, \widetilde{Q}_w, \widetilde{Q}_f),
\end{equation*}
where 
\begin{displaymath}
\begin{array}{lll}
\widetilde{Q}_0^{(1)} & = & [\overline{\pi}_2]^{e_0-k-1} + \displaystyle\sum_{i=e_0-k}^{e_0-2} j_{i+1} [\overline{\pi}_2]^i \\
\widetilde{Q}_0^{(2)} & = & [\overline{\pi}_2]^{e_0-k-1} + \displaystyle\sum_{i=e_0-k}^{e_0-2} j_{i+1} [\overline{\pi}_2]^i + [\overline{\pi}_2]^{e_0-1}\\
\widetilde{Q}_i  & = &  [\overline{\pi}_2]^{-1} Q_i, \quad \text {if $1 \leq i \leq w$}\\
\widetilde{Q}_f  & = &  [\overline{\pi}_2]^{-1} Q_f
\end{array}
\end{displaymath}
and $[\overline{\pi}_2] \widetilde{Q}^{(1)} = [\overline{\pi}_2] \widetilde{Q}^{(2)} = Q$.

If $k=0$, then just the point $\widetilde{Q}^{(1)}$ belongs to the tree, proving that the root of the tree has one and only one child.

If $k \geq 1$ and at least one of the $P_i$ is non-zero, then $\widetilde{Q}^{(1)} \not = - \widetilde{Q}^{(2)}$, hence $\widetilde{Q}^{(1)}$ and $\widetilde{Q}^{(2)}$ have different $x$-coordinates. This implies that each vertex at non-zero level $k$ of a tree rooted in $x \in V_{A_n} \backslash \{ \infty \}$ has two children.

Suppose now that all the $P_i$ are zero. In this case $x= \infty$.
If $k = 1$, then $\widetilde{Q}_0^{(1)} = - \widetilde{Q}_0^{(2)}$ and also $\widetilde{Q}^{(1)} = - \widetilde{Q}^{(2)}$. Hence the only vertex at the level 1 has exactly one child.

Finally, if $k > 1$ and  all the $P_i$ are zero, then $\widetilde{Q}^{(1)} \not = - \widetilde{Q}^{(2)}$. Hence, each of the vertices at the levels $k>1$ of the tree rooted in $\infty$ has two children.

As regards the number $e_0$, we note that $N(\pi_2^n-1) = \lvert \Kob (\F_{2^n}) \rvert$. We remind that 
\begin{equation*}
\lvert \Kob(\F_{2^n}) \rvert = 2^n+1+S^{(n)} (1)
\end{equation*}
and that $R/(\pi_2^n-1) R$ is isomorphic to a product of rings as in (\ref{orb4}). 
Since $\pi_2 \overline{\pi}_2 = 2$ in $R$, then ${e_0}$ is the greatest power of 2 which divides $\lvert \Kob (\F_{2^n}) \rvert$.
We want to prove that, if $n=2^l m$, for some odd $m$, then $e_0 = l+2$. Firstly we consider the cases $n=2$ and $n=4$. 

If $n=2$, and $l=1$, then $S^{(2)} (1) = 3$ and $\lvert \Kob (\F_{2^2}) \rvert = 8 = 2^3$. Hence $e_0 = 3 = l+2$.

If $n=4$, and $l=2$,  then $S^{(4)} (1) = -1$ and $\lvert \Kob (\F_{2^4}) \rvert = 16 = 2^4$. Hence $e_0 = 4 = l+2$.

Now consider a positive integer $n \not \in \{ 2, 4 \}$ greater than 1. Then, as a consequence of Lemma \ref{Sncon}
\begin{displaymath}
\lvert \Kob (\F_{2^n}) \rvert \equiv
\left\{
\begin{array}{ll}
0 & \pmod{2^{l+2}}\\
2^{l+2} & \pmod{2^{l+3}}.
\end{array}
\right.
\end{displaymath}
\end{proof}

\subsection{Trees rooted in elements of $V_{B_n}$}
The following Lemma characterizes the reversed trees having root in $V_{B_n}$.
\begin{lemma}\label{orb_3}
Any element $x \in V_{B_n}$ is the root of a reversed binary tree of depth 1 and has one child.
\end{lemma}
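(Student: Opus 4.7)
The plan is to imitate the argument of Lemma \ref{orb_2} almost word-for-word, but with $(\pi_2^n-1)$ replaced by $(\pi_2^n+1)$ and $\psi$ replaced by $\widetilde{\psi}$. After writing
\[
\pi_2^n+1 \;=\; \overline{\pi}_2^{e_0}\cdot\prod_{i=1}^{v}p_i^{e_i}\cdot\prod_{i=v+1}^{w}r_i^{e_i}\cdot(\sqrt{-7})^{f}
\]
in $R$, the combinatorics on the local factor $R/\overline{\pi}_2^{e_0}R$ reproduces a reversed binary tree of depth $e_0$ at every $x\in V_{B_n}$; because $B_n\subseteq\F_{2^n}^{*}$ the root is never $\infty$, so only the ``non-$\infty$'' branch of that count is needed and it gives $\lceil 2^{k-1}\rceil$ vertices at level $k$. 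Everything therefore boils down to showing that $e_0=1$, i.e.\ that $\overline{\pi}_2$ exactly divides $\pi_2^n+1$.

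To do this I would first observe that $v_{\overline{\pi}_2}(\overline{\pi}_2^n+1)=0$ for $n\ge 1$, so
\[
e_0 \;=\; v_{\overline{\pi}_2}\bigl(N(\pi_2^n+1)\bigr) \;=\; v_2\bigl(N(\pi_2^n+1)\bigr).
\]
Expanding the norm via $\pi_2+\overline{\pi}_2=-1$, $\pi_2\overline{\pi}_2=2$ and the identity $N(\pi_2^n-1) = \lvert\Kob(\F_{2^n})\rvert = 2^n+1+S^{(n)}(1)$ yields $N(\pi_2^n+1)=2^n+1-S^{(n)}(1)$. Lemma \ref{Sncon} then gives $S^{(n)}(1)\equiv -1\pmod 4$ for every $n\ge 3$ with $n\ne 4$, so $2^n+1-S^{(n)}(1)\equiv 2\pmod 4$ and hence $v_2=1$. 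The cases $n\in\{1,2\}$ are vacuous because $B_n=\emptyset$, while $n=4$ follows from the direct value $S^{(4)}(1)=-1$, which gives $N(\pi_2^4+1)=18=2\cdot 9$.

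Once $e_0=1$ is in hand, the adapted version of the counting argument in Lemma \ref{orb_2} forces the tree to have depth exactly $1$ and exactly $\lceil 2^{0}\rceil=1$ vertex at level $1$, which is the unique child of the root; as in the proof of Lemma \ref{orb_2}, one of the two candidate preimages $\widetilde{Q}^{(1)},\widetilde{Q}^{(2)}$ of a cycle point turns out to have $\widetilde{Q}_0=0$ and so is itself periodic, leaving a single tree-child. The main obstacle is this $2$-adic computation: once one recognises that $N(\pi_2^n+1)$ can be rewritten in terms of $S^{(n)}(1)$, the Kloosterman congruences already proved in Lemma \ref{Sncon} do all the work, and the rest is a purely mechanical transcription of the Lemma \ref{orb_2} machinery.
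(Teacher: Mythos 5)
Your proposal is correct and follows essentially the same route as the paper: reduce everything to showing that $\overline{\pi}_2$ exactly divides $\pi_2^n+1$, compute $N(\pi_2^n+1)=2^n+1-S^{(n)}(1)$, and invoke the congruence $S^{(n)}(1)\equiv -1 \pmod 4$ from Lemma \ref{Sncon} (with the small cases handled directly) to get $N(\pi_2^n+1)\equiv 2\pmod 4$, after which the Lemma \ref{orb_2} machinery with $e_0=1$ gives depth $1$ and a unique child. The only cosmetic difference is that you expand the norm directly from $\pi_2\overline{\pi}_2=2$ and the trace of Frobenius, whereas the paper passes through the factorization $N(\pi_2^{2n}-1)=N(\pi_2^n-1)N(\pi_2^n+1)$; both yield the same identity.
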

\begin{proof}
We remind that the elements of $B_n$ are the $x$-coordinates of the points in $\Kob (\F_{2^{2n}})_{B_n} \backslash \{(0,1), O \}$. Moreover there is an isomorphism
\begin{eqnarray*}
\widetilde{\psi} : \Kp (\F_{2^{2n}})_{B_n} & \to & R / (\pi_2^{n} + 1) R.
\end{eqnarray*}
We want to prove that the greatest power of $\overline{\pi}_2$ which divides $\pi_2^n+1$ is $1$. Since $\pi_2 \overline{\pi}_2 = 2$ in $R$, then the greatest power of $\overline{\pi}_2$ which divides $\pi_2^n+1$ is the greatest power of $2$ which divides $N(\pi_2^n+1)$. We consider the fact that
\begin{equation*}
N(\pi_2^{2n}-1) = N(\pi_2^n-1) N(\pi_2^n+1).
\end{equation*}
We know that
\begin{eqnarray*}
N(\pi_2^n-1) & = & 2^n + 1 +  S^{(n)} (1)\\
N(\pi_2^{2n}-1) & = & 2^{2n} + 1 + S^{(2n)} (1). 
\end{eqnarray*}
Now it is an easy matter to check that
\begin{equation*}
N(\pi_2^n+1) = 2^n+1-S^{(n)} (1). 
\end{equation*}
Since $S^{(n)} (1) \equiv -1 \pmod{4}$, for $n \geq 2$,  then
\begin{equation*}
N(\pi_2^n+1) \equiv 2 \pmod{4}
\end{equation*}
and we are done. Note in passing that for $n=1$ the field $\F_{2^n} = \F_{2}$ and there are no elements $x \in \F_{2}^*$ such that $\Tr_1 (x) \not = \Tr_1 (x^{-1})$.

Hence, $R/(\pi_2^n+1) R$ is isomorphic to a product of ring as in (\ref{orb4}), where $e_0 = 1$. This implies that any $x \in V_{B_n}$ is the root of a tree of depth 1. Consider now a point $P=(0,P_1, \dots, P_w) \in R/(\pi_2^n+1) R$. The only point $Q$ such that $[\overline{\pi}_2] Q = P$ is $Q=(1, [\overline{\pi}_2]^{-1} P_1, \dots, [\overline{\pi}_2]^{-1} P_w)$. Hence,  any $x \in V_{B_n}$ is the root of a tree of depth 1 and has one child.
\end{proof}

\section{Examples}
\subsection{Graph associated with $\theta$ in $\mathbf{\F_{2^{5}}}$.}
At the beginning of this paper we constructed explicitly the graph over the field $\F_{2^5}$.

The structure of the graph is as follows:
\begin{center}
\begin{tabular}{|c|c|c|}
\hline
\multicolumn{3}{|c|}{Graph $A_{5}$}\\
\hline
Length of the cycles & Number of cycles & Depth of the trees \\
\hline
1 & 1 & 2\\
5 & 1 & 2\\
\hline
\end{tabular}
\end{center}

\begin{center}
\begin{tabular}{|c|c|c|}
\hline
\multicolumn{3}{|c|}{Graph $B_{5}$}\\
\hline
Length of the cycles & Number of cycles & Depth of the trees \\
\hline
5 & 1 & 1\\
\hline
\end{tabular}
\end{center}

Let us analyse separately graphs $A_5$ and $B_5$.

\textbf{Graph $A_{5}$.}
The rational points of $\Kob (\F_{2^5})$ different from the point at infinity are all of the form $(x,y)$, for some $x \in A_{5}$, and, for any non-zero $x \in A_{5}$, there are exactly two elements $y_1, y_2 \in \F_{2^5}$ such that $(x, y_1), (x, y_2)$ are points of $\Kob (\F_{2^5})$. Moreover, there is a $1-1$ correspondence between $\Kob (\F_{2^5})$ and $R / (\pi_2^{5}-1) R$, where $R = \Z[\omega]$. 

The factorization of $\pi_2^5-1$ in primes of $R$ is
\begin{equation*}
\pi_2^{5}-1 = \overline{\pi}_2^2 \cdot (1+2 \omega).
\end{equation*}
If we denote $p_1 = 1+2 \omega$, then 
\begin{equation*}
R / (\pi_2^{5}-1) R \cong R / \overline{\pi}_2^2 R \times R /  p_1 R.
\end{equation*}

Consider the points $P = (0, r) \in R / \overline{\pi}_2^2 R \times R /  p_1 R$, where $r \in R$ is not divisible by $1+2 \omega$. Such points have additive order $11$ in $R/p_1 R$. The integer $l_r = 5$ is the smallest among the  positive integers $k$ such that $[\overline{\pi}_2]^{k} P =  P$ or $-P$, namely the smallest among the positive integers $k$ such that $p_1 \mid (\overline{\pi}_2^{k} + 1)$ or $(\overline{\pi}_2^{k}-1)$. In particular, $p_1 \mid (\overline{\pi}_2^{5} + 1)$. There are 10 points of this form, corresponding to 5 different values of $x \in \F_{2^5}$. Hence such points give rise to just one cycle of length 5. Moreover, each of these nodes is the root of a reversed binary tree of depth 2, since 2 is the greatest power of $\overline{\pi}_2$ which divides $\pi_2^5-1$. Finally, consider the point $O=(0,0)$. The corresponding node is labelled by $\infty$, belongs to a cycle of length one, namely a loop, and is the root of a binary tree of depth 2.

\textbf{Graph $B_{5}$.}
The nodes belonging to the graph $B_5$ are the $x$-coordinate of points of $\Kob (\F_{2^{10}})$ such that $x \in \F_{2^5}$ and $\Tr_5 (x) \not = \Tr_5 (x^{-1})$. We remind the isomorphism
\begin{equation*}
\Kob (\F_{2^{10}})_{B_{5}} \to R / (\pi_2^5+1) R.
\end{equation*}
We have that
\begin{equation*}
 R / (\pi_2^5+1) R \cong R/\overline{\pi}_2 R \times R/(3-2 \omega) R.
\end{equation*}

Consider the points $P=(0,r)$ where $r \not = 0$ in $R/(3-2\omega)R$. Hence, $3-2 \omega$ does not divide $r$. The integer $l_r = 5$ is the smallest among the  positive integers $k$ such that $[\overline{\pi}_2]^{k} P =  P$ or $-P$. Since there are 10 points $P$ of this form, corresponding to 5 different $x$ in $\F_{2^5}$, then there is just a cycle of length 5 in graph $B_5$. Moreover each of the nodes of the cycle is the root of a reversed binary tree of depth 1.

\subsection{Graph associated with $\theta$ over $\mathbf{\F_{2^{8}}}$.}
We can construct the field with $2^8$ elements as the splitting field of the primitive polynomial $x^8+x^4+x^3+x^2+1$ over $\F_2$. If $\alpha$ is a root of this polynomial in $\F_{2^8}$, then $\Pro (\F_{2^8}) =\left\{0 \right\} \cup \left\{\alpha^i: 1 \leq i \leq 255 \right\} \cup \left\{ \infty \right\}$.The graph is formed by the following three connected components.

\begin{center}
    \unitlength=3.2pt
    \begin{picture}(40, 120)(-20,-20)
    \gasset{Nw=4,Nh=4,Nmr=3,curvedepth=-2.5}
    \thinlines
   \tiny
    \node(A1)(10,0){$119$}
    \node(A2)(0,10){187}
    \node(A3)(-10,0){221}          
    \node(A4)(0,-10){238}
     
    \drawedge(A1,A2){}
    \drawedge(A2,A3){}
    \drawedge(A3,A4){}
    \drawedge(A4,A1){}
     
    \node(B1)(20,0){17}
    \node(B2)(0,20){136}
    \node(B3)(-20,0){68}          
    \node(B4)(0,-20){34}
    \gasset{curvedepth=0}
    \drawedge(B1,A1){}
    \drawedge(B2,A2){}
    \drawedge(B3,A3){}
    \drawedge(B4,A4){}
    
    \node(C1)(27.7,11.48){210}
    \node(C2)(11.48,27.7){105}
    \node(C3)(-11.48,27.7){150}          
    \node(C4)(-27.7,11.48){75}
    \node(C5)(-27.7,-11.48){180}
    \node(C6)(-11.48,-27.7){90}
    \node(C7)(11.48,-27.7){165}          
    \node(C8)(27.7,-11.48){45}
    
    \drawedge(C1,B1){}
    \drawedge(C8,B1){}
    \drawedge(C2,B2){}
    \drawedge(C3,B2){}
    \drawedge(C4,B3){}
    \drawedge(C5,B3){}
    \drawedge(C6,B4){}
    \drawedge(C7,B4){}
    
    \node(D1)(39.23,7.8){118}
    \node(D2)(33.25,22.2){137}
    \node(D3)(22.22,33.25){59}
    \node(D4)(7.8,39.23){196}          
    \node(D5)(-7.8,39.23){76}
    \node(D6)(-22.22,33.25){179}
    \node(D7)(-33.25,22.2){38}          
    \node(D8)(-39.23,7.8){217}

    \drawedge(D1,C1){}
    \drawedge(D2,C1){}
    \drawedge(D3,C2){}
    \drawedge(D4,C2){}
    \drawedge(D5,C3){}
    \drawedge(D6,C3){}
    \drawedge(D7,C4){}
    \drawedge(D8,C4){}    
    
    \node(D11)(39.23,-7.8){103}
    \node(D12)(33.25,-22.2){152}
    \node(D13)(22.22,-33.25){19}
    \node(D14)(7.8,-39.23){236}          
    \node(D15)(-7.8,-39.23){206}
    \node(D16)(-22.22,-33.25){49}
    \node(D17)(-33.25,-22.2){98}          
    \node(D18)(-39.23,-7.8){157}

  	\drawedge(D11,C8){}
    \drawedge(D12,C8){}
    \drawedge(D13,C7){}
    \drawedge(D14,C7){}
    \drawedge(D15,C6){}
    \drawedge(D16,C6){}
    \drawedge(D17,C5){}
    \drawedge(D18,C5){}    
    
    \node(E1)(49.75,4.9){62}
    \node(E2)(47.85,14.51){193}
    \node(E3)(44.1,23.57){247}
    \node(E4)(38.65,31.71){8}          
    \node(E5)(31.71,38.65){31}
    \node(E6)(23.57,44.1){224}
    \node(E7)(14.51,47.85){251}          
    \node(E8)(4.9,49.75){4}
    
    \drawedge(E1,D1){}
    \drawedge(E2,D1){}
    \drawedge(E3,D2){}
    \drawedge(E4,D2){}
    \drawedge(E5,D3){}
    \drawedge(E6,D3){}
    \drawedge(E7,D4){}
    \drawedge(E8,D4){} 
    
    \node(E11)(-49.75,4.9){7}
    \node(E12)(-47.85,14.51){248}
    \node(E13)(-44.1,23.57){32}
    \node(E14)(-38.65,31.71){223}          
    \node(E15)(-31.71,38.65){14}
    \node(E16)(-23.57,44.1){241}
    \node(E17)(-14.51,47.85){64}          
    \node(E18)(-4.9,49.75){191}
    
    \drawedge(E18,D5){}
    \drawedge(E17,D5){}
    \drawedge(E16,D6){}
    \drawedge(E15,D6){}
    \drawedge(E14,D7){}
    \drawedge(E13,D7){}
    \drawedge(E12,D8){}
    \drawedge(E11,D8){} 
    
    \node(E21)(49.75,-4.9){28}
    \node(E22)(47.85,-14.51){227}
    \node(E23)(44.1,-23.57){128}
    \node(E24)(38.65,-31.71){127}          
    \node(E25)(31.71,-38.65){16}
    \node(E26)(23.57,-44.1){239}
    \node(E27)(14.51,-47.85){131}          
    \node(E28)(4.9,-49.75){124}
    
     \drawedge(E21,D11){}
     \drawedge(E22,D11){}
    \drawedge(E23,D12){}
    \drawedge(E24,D12){}
    \drawedge(E25,D13){}
    \drawedge(E26,D13){}
    \drawedge(E27,D14){}
    \drawedge(E28,D14){} 
    
    \node(E31)(-49.75,-4.9){112}
    \node(E32)(-47.85,-14.51){143}
    \node(E33)(-44.1,-23.57){253}
    \node(E34)(-38.65,-31.71){2}          
    \node(E35)(-31.71,-38.65){1}
    \node(E36)(-23.57,-44.1){254}
    \node(E37)(-14.51,-47.85){56}          
    \node(E38)(-4.9,-49.75){199}
    
     \drawedge(E31,D18){}
     \drawedge(E32,D18){}
    \drawedge(E33,D17){}
    \drawedge(E34,D17){}
    \drawedge(E35,D16){}
    \drawedge(E36,D16){}
    \drawedge(E37,D15){}
    \drawedge(E38,D15){} 
    
    \node(F1)(59.93,2.95){89}
    \node(F2)(59.35,8.8){166}
    \node(F3)(58.2,14.58){22}
    \node(F4)(56.49,20.21){233}          
    \node(F5)(54.24,25.65){168}
    \node(F6)(51.46,30.84){87}
    \node(F7)(48.19,35.74){130}          
    \node(F8)(44.46,40.29){125}

     \drawedge(F1,E1){}
     \drawedge(F2,E1){}
    \drawedge(F3,E2){}
    \drawedge(F4,E2){}
    \drawedge(F5,E3){}
    \drawedge(F6,E3){}
    \drawedge(F7,E4){}
    \drawedge(F8,E4){} 
    
    \node(F11)(40.29,44.46){83}
    \node(F12)(35.74,48.19){172}
    \node(F13)(30.84,51.46){11}
    \node(F14)(25.65,54.24){244}          
    \node(F15)(20.21,56.49){84}
    \node(F16)(14.58,58.2){171}
    \node(F17)(8.8,59.35){190}          
    \node(F18)(2.95,59.93){65}
    
     \drawedge(F11,E5){}
     \drawedge(F12,E5){}
    \drawedge(F13,E6){}
    \drawedge(F14,E6){}
    \drawedge(F15,E7){}
    \drawedge(F16,E7){}
    \drawedge(F17,E8){}
    \drawedge(F18,E8){} 
    
 	\node(F21)(-59.93,2.95){167}
    \node(F22)(-59.35,8.8){88}
    \node(F23)(-58.2,14.58){154}
    \node(F24)(-56.49,20.21){101}          
    \node(F25)(-54.24,25.65){245}
    \node(F26)(-51.46,30.84){10}
    \node(F27)(-48.19,35.74){162}          
    \node(F28)(-44.46,40.29){93}
    
    \drawedge(F21,E11){}
    \drawedge(F22,E11){}
    \drawedge(F23,E12){}
    \drawedge(F24,E12){}
    \drawedge(F25,E13){}
    \drawedge(F26,E13){}
    \drawedge(F27,E14){}
    \drawedge(F28,E14){} 

    \node(F31)(-40.29,44.46){79}
    \node(F32)(-35.74,48.19){176}
    \node(F33)(-30.84,51.46){53}
    \node(F34)(-25.65,54.24){202}          
    \node(F35)(-20.21,56.49){20}
    \node(F36)(-14.58,58.2){235}
    \node(F37)(-8.8,59.35){69}          
    \node(F38)(-2.95,59.93){186}   
    
    \drawedge(F31,E15){}
    \drawedge(F32,E15){}
    \drawedge(F33,E16){}
    \drawedge(F34,E16){}
    \drawedge(F35,E17){}
    \drawedge(F36,E17){}
    \drawedge(F37,E18){}
    \drawedge(F38,E18){}

    \node(F41)(59.93,-2.95){158}
    \node(F42)(59.35,-8.8){97}
    \node(F43)(58.2,-14.58){149}
    \node(F44)(56.49,-20.21){106}          
    \node(F45)(54.24,-25.65){215}
    \node(F46)(51.46,-30.84){40}
    \node(F47)(48.19,-35.74){138}          
    \node(F48)(44.46,-40.29){117}
    
     \drawedge(F41,E21){}
    \drawedge(F42,E21){}
    \drawedge(F43,E22){}
    \drawedge(F44,E22){}
    \drawedge(F45,E23){}
    \drawedge(F46,E23){}
    \drawedge(F47,E24){}
    \drawedge(F48,E24){}  

    \node(F51)(40.29,-44.46){250}
    \node(F52)(35.74,-48.19){5}
    \node(F53)(30.84,-51.46){174}
    \node(F54)(25.65,-54.24){81}          
    \node(F55)(20.21,-56.49){211}
    \node(F56)(14.58,-58.2){44}
    \node(F57)(8.8,-59.35){178}          
    \node(F58)(2.95,-59.93){77}
    
    \drawedge(F51,E25){}
    \drawedge(F52,E25){}
    \drawedge(F53,E26){}
    \drawedge(F54,E26){}
    \drawedge(F55,E27){}
    \drawedge(F56,E27){}
    \drawedge(F57,E28){}
    \drawedge(F58,E28){}  
    
 	\node(F61)(-59.93,-2.95){122}
    \node(F62)(-59.35,-8.8){133}
    \node(F63)(-58.2,-14.58){86}
    \node(F64)(-56.49,-20.21){169}          
    \node(F65)(-54.24,-25.65){42}
    \node(F66)(-51.46,-30.84){213}
    \node(F67)(-48.19,-35.74){95}          
    \node(F68)(-44.46,-40.29){160}
    
    \drawedge(F61,E31){}
    \drawedge(F62,E31){}
    \drawedge(F63,E32){}
    \drawedge(F64,E32){}
    \drawedge(F65,E33){}
    \drawedge(F66,E33){}
    \drawedge(F67,E34){}
    \drawedge(F68,E34){} 

    \node(F71)(-40.29,-44.46){80}
    \node(F72)(-35.74,-48.19){175}
    \node(F73)(-30.84,-51.46){21}
    \node(F74)(-25.65,-54.24){234}          
    \node(F75)(-20.21,-56.49){61}
    \node(F76)(-14.58,-58.2){194}
    \node(F77)(-8.8,-59.35){43}          
    \node(F78)(-2.95,-59.93){212}
    
     \drawedge(F71,E35){}
    \drawedge(F72,E35){}
    \drawedge(F73,E36){}
    \drawedge(F74,E36){}
    \drawedge(F75,E37){}
    \drawedge(F76,E37){}
    \drawedge(F77,E38){}
    \drawedge(F78,E38){} 
 
    \end{picture}
  \end{center}
  
  \begin{center}
    \unitlength=3.2pt
    \begin{picture}(40, 140)(-20,-40)
    \gasset{Nw=4,Nh=4,Nmr=3,curvedepth=0}
    \tiny
    \node(A1)(59.90,3.36){41}
    \node(B1)(49.92,2.8){18}
    
    \node(A2)(59.15,10.05){237}
    \node(B2)(49.29,8.37){207}
    
    \node(A3)(57.65,16.61){48}          
    \node(B3)(48.04,13.84){203}

   	\node(A4)(55.43,22.96){52}
    \node(B4)(46.19,19.13){230}

    \node(A5)(52.51,29.02){25}          
    \node(B5)(43.76,24.18){232}

	\node(A6)(48.93,34.72){23}
    \node(B6)(40.78,28.93){114}
    
    \node(A7)(44.73,39.98){141}          
    \node(B7)(37.28,33.31){218}

    \node(A8)(39.98,44.74){37}
    \node(B8)(33.31,37.28){66}
    
    \node(A9)(34.72,48.93){189}
    \node(B9)(28.93,40.78){249}
    
	\node(A10)(29.02, 52.51){6}          
    \node(B10)(24.18, 43.76){121}  
    
    \node(A11)(22.96,55.43){134}
    \node(B11)(19.13,46.19){220} 
    
    \node(A12)(16.61,57,65){35}          
    \node(B12)(13.84,48.04){29}
    
    \node(A13)(10.05,59.15){226}
    \node(B13)(8.37,49.29){78}
    
    \node(A14)(3.36,59.90){177}
    \node(B14)(2.8,49.92){91}
    
    \node(A15)(-59.90,3.36){198}
    \node(B15)(-49.92,2.8){109}
     
    \node(A16)(-59.15,10.05){139}
    \node(B16)(-49.29,8.37){57}
    
    \node(A17)(-57.65,16.61){140}          
    \node(B17)(-48.04,13.84){116}

   	\node(A18)(-55.43,22.96){26}
    \node(B18)(-46.19,19.13){115}

    \node(A19)(-52.51,29.02){24}          
    \node(B19)(-43.76,24.18){229}

	\node(A20)(-48.93,34.72){246}
    \node(B20)(-40.78,28.93){231}
    
    \node(A21)(-44.73,39.98){148}          
    \node(B21)(-37.28,33.31){9}

    \node(A22)(-39.98,44.74){54}
    \node(B22)(-33.31,37.28){107}
    
    \node(A23)(-34.72,48.93){92}
    \node(B23)(-28.93,40.78){201}
    
	\node(A24)(-29.02, 52.51){100}          
    \node(B24)(-24.18, 43.76){163}  
    
    \node(A25)(-22.96,55.43){208}
    \node(B25)(-19.13,46.19){155} 
    
    \node(A26)(-16.61,57,65){192}          
    \node(B26)(-13.84,48.04){47}
    
    \node(A27)(-10.05,59.15){183}
    \node(B27)(-8.37,49.29){63}
    
    \node(A28)(-3.36,59.90){164}
    \node(B28)(-2.8,49.92){72}
    
    \node(A101)(59.90,-3.36){108}
    \node(B101)(49.92,-2.8){214}
    
    \node(A102)(59.15,-10.05){184}
    \node(B102)(49.29,-8.37){147}
    
    \node(A103)(57.65,-16.61){200}          
    \node(B103)(48.04,-13.84){71}

   	\node(A104)(55.43,-22.96){161}
    \node(B104)(46.19,-19.13){55}

    \node(A105)(52.51,-29.02){129}          
    \node(B105)(43.76,-24.18){94}

	\node(A106)(48.93,-34.72){111}
    \node(B106)(40.78,-28.93){126}
    
    \node(A107)(44.73,-39.98){73}          
    \node(B107)(37.28,-33.31){144}

    \node(A108)(39.98,-44.74){99}
    \node(B108)(33.31,-37.28){182}
    
    \node(A109)(34.72,-48.93){197}
    \node(B109)(28.93,-40.78){156}
    
	\node(A110)(29.02, -52.51){70}          
    \node(B110)(24.18, -43.76){58}  
    
    \node(A111)(22.96,-55.43){13}
    \node(B111)(19.13,-46.19){185} 
    
    \node(A112)(16.61,-57,65){12}          
    \node(B112)(13.84,-48.04){242}
    
    \node(A113)(10.05,-59.15){123}
    \node(B113)(8.37,-49.29){243}
    
    \node(A114)(3.36,-59.90){74}
    \node(B114)(2.8,-49.92){132}
    
    \node(A115)(-59.90,-3.36){146}
    \node(B115)(-49.92,-2.8){33}
    
    \node(A116)(-59.15,-10.05){222}
    \node(B116)(-49.29,-8.37){252}
    
    \node(A117)(-57.65,-16.61){3}          
    \node(B117)(-48.04,-13.84){188}

   	\node(A118)(-55.43,-22.96){67}
    \node(B118)(-46.19,-19.13){110}

    \node(A119)(-52.51,-29.02){145}          
    \node(B119)(-43.76,-24.18){142}

	\node(A120)(-48.93,-34.72){113}
    \node(B120)(-40.78,-28.93){39}
    
    \node(A121)(-44.73,-39.98){216}          
    \node(B121)(-37.28,-33.31){173}

    \node(A122)(-39.98,-44.74){82}
    \node(B122)(-33.31,-37.28){36}
    
    \node(A123)(-34.72,-48.93){219}
    \node(B123)(-28.93,-40.78){159}
    
	\node(A124)(-29.02, -52.51){96}          
    \node(B124)(-24.18, -43.76){151}  
    
    \node(A125)(-22.96,-55.43){104}
    \node(B125)(-19.13,-46.19){205} 
    
    \node(A126)(-16.61,-57,65){50}          
    \node(B126)(-13.84,-48.04){209}
    
    \node(A127)(-10.05,-59.15){46}
    \node(B127)(-8.37,-49.29){228}
    
    \node(A128)(-3.36,-59.90){27}
    \node(B128)(-2.8,-49.92){181}
	
	\drawedge(B1,B2){}	
    \drawedge(B2,B3){}
    \drawedge(B3,B4){}
    \drawedge(B4,B5){}
    \drawedge(B5,B6){}
    \drawedge(B6,B7){}
    \drawedge(B7,B8){}
    \drawedge(B8,B9){}
    \drawedge(B9,B10){}
    \drawedge(B10,B11){}
    \drawedge(B11,B12){}
    \drawedge(B12,B13){}
    \drawedge(B13,B14){}
    \drawedge(B14,B28){}
    \drawedge(B28,B27){}
    \drawedge(B27,B26){}
    \drawedge(B26,B25){}
    \drawedge(B25,B24){}
    \drawedge(B24,B23){}
    \drawedge(B23,B22){}
    \drawedge(B22,B21){}
    \drawedge(B21,B20){}
    \drawedge(B20,B19){}
    \drawedge(B19,B18){}
    \drawedge(B18,B17){}
    \drawedge(B17,B16){}
    \drawedge(B16,B15){}
    \drawedge(B101,B1){}
    \drawedge(B102,B101){}
    \drawedge(B103,B102){}
    \drawedge(B104,B103){}
    \drawedge(B105,B104){}
    \drawedge(B106,B105){}
    \drawedge(B107,B106){}
    \drawedge(B108,B107){}
    \drawedge(B109,B108){}
    \drawedge(B110,B109){}
    \drawedge(B111,B110){}
    \drawedge(B112,B111){}
    \drawedge(B113,B112){}
    \drawedge(B114,B113){}
    \drawedge(B115,B116){}
    \drawedge(B116,B117){}
    \drawedge(B117,B118){}
    \drawedge(B118,B119){}
    \drawedge(B119,B120){}
    \drawedge(B120,B121){}
    \drawedge(B121,B122){}
    \drawedge(B122,B123){}
    \drawedge(B123,B124){}
    \drawedge(B124,B125){}
    \drawedge(B125,B126){}
    \drawedge(B126,B127){}
    \drawedge(B127,B128){}
    \drawedge(B128,B114){}
    \drawedge(B15,B115){}
    
    \drawedge(A1,B1){}
    \drawedge(A2,B2){}
    \drawedge(A3,B3){}
    \drawedge(A4,B4){}
    \drawedge(A5,B5){}
    \drawedge(A6,B6){}
    \drawedge(A7,B7){}
    \drawedge(A8,B8){}
    \drawedge(A9,B9){}
    \drawedge(A10,B10){}
    \drawedge(A11,B11){}
    \drawedge(A12,B12){}
    \drawedge(A13,B13){}
    \drawedge(A14,B14){}
    \drawedge(A15,B15){}
    \drawedge(A16,B16){}
    \drawedge(A17,B17){}
    \drawedge(A18,B18){}
    \drawedge(A19,B19){}
    \drawedge(A20,B20){}
    \drawedge(A21,B21){}
    \drawedge(A22,B22){}
    \drawedge(A23,B23){}
    \drawedge(A24,B24){}
	\drawedge(A25,B25){}
    \drawedge(A26,B26){}
    \drawedge(A27,B27){}
    \drawedge(A28,B28){}
    \drawedge(A101,B101){}
    \drawedge(A102,B102){}
    \drawedge(A103,B103){}
    \drawedge(A104,B104){}
    \drawedge(A105,B105){}
    \drawedge(A106,B106){}
    \drawedge(A107,B107){}
    \drawedge(A108,B108){}
    \drawedge(A109,B109){}
    \drawedge(A110,B110){}
    \drawedge(A111,B111){}
    \drawedge(A112,B112){}
    \drawedge(A113,B113){}
    \drawedge(A114,B114){}
    \drawedge(A115,B115){}
    \drawedge(A116,B116){}
    \drawedge(A117,B117){}
    \drawedge(A118,B118){}
    \drawedge(A119,B119){}
    \drawedge(A120,B120){}
    \drawedge(A121,B121){}
    \drawedge(A122,B122){}
    \drawedge(A123,B123){}
    \drawedge(A124,B124){}
	\drawedge(A125,B125){}
    \drawedge(A126,B126){}
    \drawedge(A127,B127){}
    \drawedge(A128,B128){}
   
    \end{picture}
  \end{center}
  
   \begin{center}
    \unitlength=3.2pt
    \begin{picture}(80, 65)(0,-10)
    \gasset{Nw=4,Nh=4,Nmr=3,curvedepth=0}
    \thinlines
   \tiny
    \node(A1)(45,0){$\infty$}
    
    \node(B1)(45,10){`0'}
    
    \node(C1)(45,20){0}
    
    \node(D1)(25,30){170}
    \node(D2)(65,30){85}
              
    \node(E1)(15,40){51}
   	\node(E2)(35,40){204}
    \node(E3)(55,40){102}
    \node(E4)(75,40){153}

	\node(F1)(10,50){15}
    \node(F2)(20,50){240}
    \node(F3)(30,50){60}          
    \node(F4)(40,50){195}
    \node(F5)(50,50){30}
    \node(F6)(60,50){225}
    \node(F7)(70,50){120}
    \node(F8)(80,50){135}

	\drawedge(F1,E1){}	
    \drawedge(F2,E1){}
    \drawedge(F3,E2){}
    \drawedge(F4,E2){}
    \drawedge(F5,E3){}
    \drawedge(F6,E3){}
    \drawedge(F7,E4){}
    \drawedge(F8,E4){}
    \drawedge(E1,D1){}
    \drawedge(E2,D1){}
    \drawedge(E3,D2){}
    \drawedge(E4,D2){}
    \drawedge(D1,C1){}
    \drawedge(D2,C1){}
    \drawedge(C1,B1){}
    \drawedge(B1,A1){}
    \drawloop[loopangle=-90](A1){} 
    \end{picture}
  \end{center}

The structure of the graph is summarized in the following tables.
\begin{center}
\begin{tabular}{|c|c|c|}
\hline
\multicolumn{3}{|c|}{Graph $A_{8}$}\\
\hline
Length of the cycles & Number of cycles & Depth of the trees \\
\hline
1 & 1 & 5\\
4 & 1 & 5\\
\hline
\end{tabular}
\end{center}

\begin{center}
\begin{tabular}{|c|c|c|}
\hline 
\multicolumn{3}{|c|}{Graph $B_{8}$}\\
\hline
Length of the cycles & Number of cycles & Depth of the trees \\
\hline
56 & 1 & 1\\
\hline
\end{tabular}
\end{center}
\textbf{Graph $A_{8}$.}
The points belonging to $\Kob (\F_{2^{8}})$ different from the point at infinity are all of the form $(x,y)$, for some $x \in A_{8}$, and, for any non-zero $x \in A_{8}$, there are exactly two elements $y_1, y_2 \in \F_{2^{8}}$ such that $(x, y_1), (x, y_2)$ are points of $\Kob (\F_{2^{8}})$. Moreover, there is a bijection between $\Kob (\F_{2^{8}})$ and $R / (\pi_2^{8}-1) R$, where $R = \Z[\omega]$.

After factorization in primes of $R$ we get that
\begin{equation*}
{\pi}_2^8-1 = \overline{\pi}_2^5 \cdot 3.
\end{equation*}

Hence,
\begin{equation*}
R / (\pi_2^8-1) R \cong R / \overline{\pi}_2^5 R \times R / 3 R.
\end{equation*}

Consider the points $P$ of the form $(0,r)$, being $r$ an element of $R/3R$ different from zero. The additive order of $r$ in $R/3R$ is $3$. The integer $l_r = 4$ is the smallest among the  positive integers $k$ such that $[\overline{\pi}_2]^{k} P =  P$ or $-P$. In particular, $3 \mid (\overline{\pi}_2^{4}+1)$. Hence, the $x$-coordinate of each of these points belongs to a cycle of length $4$. Since there are 8 points of this form, corresponding to $4$ different elements $x \in \F_{2^8}$, then these 8 points give rise to just one cycle of length 4. Moreover, being $5$ the greatest power of $\overline{\pi}_2$ which divides $\pi_2^8-1$, each of the nodes in the cycle is the root of a reversed binary tree of depth 5.

Finally, the $x$-coordinate of the point $(0,0)$ is $\infty$ and forms a loop. This node is the root of a reversed binary tree of depth 5.

\textbf{Graph $B_{8}$.}
The graph associated with the action of the map $\theta$ on the points belonging to $B_8$ can be studied relying upon the structure of the quotient ring $R/(\pi_2^8+1) R$ which is isomorphic, after factorization in primes of $\pi_2^8+1$, to
\begin{equation*}
R/\overline{\pi}_2 R \times R / (5-8 \omega) R.
\end{equation*}
Let us define $p = 5-8 \omega$. The period of the $x$-coordinate of a point $P=(0,r)$, where $r \not = 0$ in $R/p R$,  under the action of the map $\theta$ is $l_r$, being $l_r$ the smallest among the  positive integers $k$ such that $[\overline{\pi}_2]^{k} P =  P$ or $-P$. The smallest such integer is $56$. In particular $p \mid (\overline{\pi}_2^{56}+1)$. Since there are 112 such points, then there is just a cycle of length 56 whose nodes represent the $x$-coordinate of the 112 points. Moreover each of these nodes is the root of a reversed binary tree of depth one.

\bibliography{Refs}
\end{document}